\definecolor{lightblue}{rgb}{.90,.95,1}
\numberwithin{figure}{section}
\newcommand{\overbar}[1]{\mkern 1.5mu\overline{\mkern-1.5mu#1\mkern-1.5mu}\mkern 1.5mu}
\DeclareMathOperator*{\argmax}{arg\,max}
\NewDocumentCommand{\lplabel}{o m}{%
	\makebox[0pt][r]{#2\hspace*{2em}}%
	\IfNoValueF{#1}
	{\def\@currentlabel{#2}\ltx@label{#1}}
}
\newcommand{\twostageP}{\textsc{$2$-Stage Reward-DLP}}
\newcommand{\rewardP}{\textsc{Reward-DLP}}
\newcommand{\transitionP}{\textsc{Transition-DLP}}
\newcommand{\prob}[1]{\mathbf{Pr}\left[#1\right]}
\newcommand\denselyConnectNodes[2]{
	\foreach \n [count=\lyrIdx, remember=\lyrIdx as \previdx, remember=\n as \prevn] in #2 {
		\foreach \y in {1,...,\n} {
			\ifnum \lyrIdx > 1
			\foreach \x in {1,...,\prevn}
			\draw[->] (#1-\previdx-\x) -- (#1-\lyrIdx-\y);
			\fi
		}
	}
}
\newcommand{\ols}[1]{\mskip.5\thinmuskip\overline{\mskip-.5\thinmuskip {#1} \mskip-.5\thinmuskip}\mskip.5\thinmuskip} 
\newcommand{\olsi}[1]{\,\overline{\!{#1}}} 
\newcommand\closure[1]{
	\tctestifnum{\count@stringtoks{#1}>1} 
	{\ols{#1}} 
	{\olsi{#1}} 
}
\begin{document}
\title{Robust Deterministic Policies for Markov Decision Processes under Budgeted Uncertainty}

\titlerunning{Robust Deterministic Policies for MDPs under Budgeted Uncertainty}
\author{Fei Wu\inst{}\and
Erik Demeulemeester\inst{}\and
Jannik Matuschke\inst{}}

\authorrunning{Fei Wu, Erik Demeulemeester, Jannik Matuschke}

\institute{Research Center for Operations Management, KU Leuven, 3000 Leuven, Belgium
\email{fei.wu@kuleuven.be}\\
\email{erik.demeulemeester@kuleuven.be}\\
\email{jannik.matuschke@kuleuven.be}}

\maketitle             

\begin{abstract}
This paper studies the computation of robust deterministic policies for Markov Decision Processes (MDPs) in the Lightning Does Not Strike Twice~(LDST) model of \citet*{mannor2012lightning}. In this model, designed to provide robustness in the face of uncertain input data while not being overly conservative, transition probabilities and rewards are uncertain and the uncertainty set is constrained by a budget that limits the number of states whose parameters can deviate from their nominal values.
 
\citet{mannor2012lightning} showed that optimal randomized policies for MDPs in the LDST regime can be efficiently computed when only the rewards are affected by uncertainty. In contrast to these findings, we observe that the computation of optimal deterministic policies is $N\!P$-hard even when only a single terminal reward may deviate from its nominal value and the MDP consists of $2$ time periods. For this hard special case, we then derive a constant-factor approximation algorithm by combining two relaxations based on the \textsc{Knapsack Cover} and \textsc{Generalized Assignment} problem, respectively. For the general problem with possibly a large number of deviations and a longer time horizon, we derive strong inapproximability results for computing robust deterministic policies as well as $\Sigma_2^p$-hardness, indicating that the general problem does not even admit a compact mixed integer programming formulation.
\end{abstract}

\section{Introduction}
\label{sec:introduction}
Markov decision processes (MDPs) are models for sequential decision making based on stochastic information about the outcome of individual actions. They have become an important tool for a large variety of applications in diverse fields such as operations research, machine learning, or engineering.
In practice, important parameters of an MDP, such as the rewards or the transition probabilities, are often hard to estimate precisely due to limited and potentially noisy data. 
Not accounting for such uncertainty in the input data when optimizing policies may significantly deteriorate solution quality~\citep{mannor2007bias}. 

This uncertainty motivates the study of \emph{robust} MDP models that optimize policies anticipating a worst-case realization of transition probabilities and rewards from a given uncertainty set.
A widespread assumption in much of the work on such robust MDP models is that the uncertainty set is \emph{rectangular}, i.e., the realization of the uncertain data happens independently at each state of the MDP.
While this assumption makes finding an optimal robust policy computationally tractable in many cases, it may lead to overly conservative solutions, as it allows the worst case to occur at each state simultaneously---a pessimistic scenario that rarely happens in practice.

To mitigate this issue, \citet*{mannor2016robust} introduced the \emph{Lightning Does Not Strike Twice} (LDST) model, which features a budget bounding the number of states for which the parameters may deviate from their given nominal values (within a specified uncertainty set that is independent from deviations at other states). 
The budget thus introduces a global constraint that couples uncertainty realizations at different states, reducing the conservatism of  rectangular models.
Moreover, the problem of finding optimal policies remains tractable in the LDST setting, when either allowing the decision maker to employ randomization in the policy and restricting the uncertainty to affect only rewards, or when considering an adaptive model in which both policy decision as well as uncertainty realizations are history-dependent~\citep{mannor2016robust}.

Implementing randomized policies, however, is not always possible or desirable in practical contexts. 
In this paper, we therefore investigate the complexity of the LDST model for \emph{deterministic} policies.
We establish that the problem of finding a deterministic policy optimizing the worst-case reward is $\Sigma_2^p$-hard, effectively ruling out the possibility of compact mixed integer programming formulations for the problem under standard complexity assumptions.
We moreover observe that even in two-stage MDPs where only a single reward may deviate from its nominal value, the problem remains $N\!P$-hard, contrasting \citeauthor{mannor2016robust}'s (\citeyear{mannor2016robust}) positive results for the randomized-policy case.
In view of these hardness results, we turn our attention to the computation of approximately optimal policies. 
While we show that, unless $P = N\!P$, no bounded approximation guarantee can be achieved for the general problem, we derive a constant-factor approximation for the aforementioned two-stage case with reward uncertainty.

\paragraph{Outline of this paper}
In \cref{sec:preliminaries}, we provide an overview of robust MDP models, including a formal description of the general LDST model.
In \cref{sec:conbtributions}, we provide formal statements of our contributions and discuss their implications, introducing the two-stage special case along the way.
In \cref{sec:reward}, we present two simple reductions that establish our hardness results for the LDST model under reward uncertainty.
In \cref{sec:approximation}, we then describe the constant-factor approximation for two-stage LDST under reward uncertainty, establishing our main positive result.
Finally, in \cref{sec:hardness}, we describe the reductions that prove our complexity results for the general LDST model with uncertainty in the transition probablities.

\section{Preliminaries: Robust MDP models}
\label{sec:preliminaries}

In this section, we introduce the basic concepts and notation for (robust) MDP models used in this article, also discussing related results from literature.
We remark that we only cover results directly related to the models studied in the present paper.
For a comprehensive overview of the numerous robust-optimization variants of MDP models, we refer the reader to \citet{nilim2005robust, iyengar2005robust, satia1973markovian, white1994markov, givan2000bounded, goyal2023robust}.

Before we discuss the robust MDP models leading up to the LDST framework, we first give an introduction to MDPs with known parameters.

\subsection{Markov decision processes with known parameters}

A \emph{Markov decision process} (MDP) consists of a tuple $\langle S, A, p, r, s_0\rangle$ with the following components. 
The finite set  $ S $ is the \emph{state space}.
For each state $s\in S$, there is a finite set \emph{actions} $A(s)$. 
The \emph{transition kernel} $p$ consists of a probability distribution $p(\cdot|s, a)$ on $S$ for each $s \in S$ and $a \in A(s)$, with $p(t|s,a)$ denoting the \emph{transition probability} from $s$ to state $t$ under action $a$.
States $s \in S$ with $A(s) = \emptyset$ are called \emph{terminal states}.
Each terminal state $s$ is equipped with a \emph{reward} $r(s) \in \mathbb{R}$.\footnote{Throughout this paper, we consider only terminal rewards for notational convenience. We remark that in the context of our study, this is without loss of generality compared to the more common setting where rewards are associated with actions. We discuss this in more detail in \cref{rem:wlog} in \cref{sec:conbtributions}.}
Finally, there is an \emph{initial state} $s_0 \in S$.
Throughout the paper, we will only consider \emph{finite-horizon} MDPs, that is, we will assume that $S$ is partitioned in sets $S_0, \dots, S_{T}$ for some $T \in \mathbb{N}$ such that $S_0 = \{s_0\}$, the states in $S_T$ are exactly the terminal states, and $p(t|s,a) > 0$ for $s, t \in S$ and $a \in A(s)$ implies $s \in S_{\tau}$ and $t \in S_{\tau+1}$ for some $\tau \in \{0, \dots, T-1\}$.

A (deterministic, history-independent) policy is a function $\pi$ that assigns to each non-terminal state $s \in S$ an action $\pi(s) \in A(s)$.
Each policy defines a random sequence of states $s^{\pi}_0, \dots, s^{\pi}_T$ with $\prob{s^{\pi}_0 = s_0} = 1$ and $\prob{s^{\pi}_{\tau+1} = t | s^{\pi}_{\tau} = s}$ $= p(t | s, \pi(s))$ for $\tau \in \{0, \dots, T-1\}$ and $s \in S_{\tau}$.
The \emph{(nominal) reward} obtained by policy $\pi$ is 
\begin{align*}
    R(\pi) := \mathbb{E}\left[r(s^{\pi}_T)\right]=\sum_{s \in S_{T}} \prob{s^{\pi}_T = s} r(s).
\end{align*}
It is well known that a policy achieving maximum reward can be computed in polynomial time, e.g., using dynamic programming or a linear programming formulation~\citep{puterman2014markov}.
It is also well-known that the maximum reward that can be achieved by a deterministic, history-independent policy as defined here is the same as the maximum reward that can be achieved by more general randomized, history-dependent policies where the action taken by the policy may be random and/or depend on previous states of the sequence.

\subsection{Robust MDPs: The general framework and rectangularity}
As mentioned in \cref{sec:introduction}, parameters such as the transition probabilities $p$ and rewards $r$ are often hard to estimate.
To address this issue of parameter uncertainty, MDPs have been studied via the lens of (distributionally) robust optimization~\citep{nilim2005robust, iyengar2005robust, satia1973markovian, white1994markov, givan2000bounded}. 
In the general robust MDP framework, we are given an MDP $\langle S, A, p, r, s_0\rangle$ and an \emph{uncertainty set} $\hat{\mathcal{P}}$ of alternate transition kernels $\hat{p}$, i.e., $\hat{p}(\cdot|s,a)$ is a probability distribution on $S$ for each $\hat{p} \in \hat{\mathcal{P}}$ and each $s \in S$ and $a \in A(s)$.
The goal is to find a policy maximizing the \emph{worst-case reward} among all possible transition kernels from $\hat{\mathcal{P}}$, that is,
\begin{align*}
    \hat{R}(\pi) := \min_{\hat{p} \in \hat{\mathcal{P}}} \mathbb{E}\left[r(s^{\pi,\hat{p}}_T)\right]= \min_{\hat{p} \in \hat{\mathcal{P}}} \sum_{s \in S_{T}} \prob{s^{\pi,\hat{p}}_T = s} r(s),
\end{align*}
where $s^{\pi,\hat{p}}_0, \dots, s^{\pi,\hat{p}}_T$ is the random sequence of states induced by policy $\pi$ when using transition kernel $\hat{p}$ instead of $p$.

The complexity of finding optimal policies in the robust MDP framework can vary greatly, depending on the nature of the uncertainty sets.
An important class of uncertainty sets for which computing optimal robust policies remains tractable are so-called \emph{$(s, a)$-rectangular} uncertainty sets~\citep{iyengar2005robust,nilim2005robust}.
In such uncertainty sets, the realization of the transition probabilities can be chosen independently for each state-action pair $s \in S$ and $a \in A(s)$ from a local uncertainty set $\hat{P}_{s, a}$ containing probability distributions over $S$, i.e., an $(s, a)$-rectangular uncertainty set is of the form $\hat{\mathcal{P}} = \times_{s \in S, a \in A(s)} \hat{{P}}_{s,a}$. \citet{wiesemann2013robust} extend this concept to a more general class called $s$-rectangular uncertainty sets, where the transition probabilities can be selected independently for each state $s \in S$ from the uncertainty set $\hat{P}_{s}=\{\hat{{P}}_{s,a}\;:\; a\in A(s)\}$, and transition probabilities for different actions in the same state may be dependent, i.e., $\hat{\mathcal{P}} = \times_{s \in S} \hat{{P}}_{s}$.  
While for $(s, a)$-rectangular uncertainty sets it remains true that allowing randomized policies does not improve the optimal worst-case reward, \citet{wiesemann2013robust} observe that this is no longer true for $s$-rectangular uncertainty sets.
They show that an optimal randomized policy can be computed in polynomial time for this type of uncertainty set by a robust value iteration approach.

\subsection{The LDST model}
While rectangular uncertainty sets allow for efficient computation of robust policies, the resulting robust MDP models are based on the fairly conservative assumption that the worst-case realization can occur simultaneously at each action chosen by the policy at hand.
To mitigate the conservatism of $(s, a)$-rectangular uncertainty sets, \citet*{mannor2016robust}, taking inspiration in the widely used budgeted uncertainty framework of \citet{bertsimas2004price}, introduced  the \emph{Lightning Does Not Strike Twice} model (LDST). 
The LDST model allows to adjust the desired degree of robustness/conservatism via an \emph{uncertainty budget} $k \in \mathbb{N}$ that determines the number of state-action pairs $s \in S$ and $a \in A(s)$ for which the transition probabilities may deviate from their specified nominal values~$p(\cdot | s, a)$.
Formally, a \emph{budgeted uncertainty set} in the LDST model is of the form
\begin{align*}
P = \bigl\{ \hat{p} \;:\; & \hat{p}(\cdot | s, a) \in \hat{\mathcal{P}}_{sa} \text{ for all } s \in S, a \in A(s) \text{ and } \\
& |\{(s, a): s \in S, a \in A(s),\, \hat{p}(\cdot | s, a) \neq p(\cdot | s, a)\}| \leq k \bigr\},
\end{align*}
where each $P_{sa}$ for $s \in S$ and $a \in A(s)$ is a set of distributions over $S$ with $p(\cdot|s, a) \in \hat{P}_{sa}$.

\citet[Theorem~2]{mannor2012lightning} observed that even evaluating the worst-case reward $\hat{R}(\pi)$ of a given policy $\pi$ is $N\!P$-hard in the LDST regime with uncertain transition kernels, no matter whether the policy in question is randomized or deterministic, Markovian or history-dependent.
They therefore investigated the LDST model under \emph{uncertain rewards}, where all transition probabilities retain their nominal values but the rewards of $k$ terminal states may deviate from their nominal values within given uncertainty sets, that is, the goal is to find a policy maximizing the worst-case reward
\begin{align*}
    \hat{R}(\pi) := \min_{\hat{r} \in \hat{\mathcal{Q}}} \mathbb{E}\left[\hat{r}(s^{\pi}_T)\right]= \min_{\hat{r} \in \hat{\mathcal{Q}}} \sum_{s \in S_{T}} \prob{s^{\pi}_T = s} \hat{r}(s),
\end{align*}
where the uncertainty set $\mathcal{Q}$ is of the form
\[\scalebox{0.9}{$\hat{\mathcal{Q}} = \left\{\hat{r} \in \mathbb{R}^{S_{T}}: \hat{r}(s) \in \{r(s), r'(s)\} \text{ for all } s \in S_T \text{ and } |\{s \in S_T : \hat{r}(s) \neq r(s)\}| \,\leq\, k\right\},$}\]
where $r'(s) \leq r(s)$ is the alternative reward at terminal state $s \in S_T$ within the uncertainty set.
Note that reward uncertainty can easily be modeled using uncertainty in the transition probabilities by a standard reduction, so reward uncertainty constitutes a special case of transition uncertainty.
\citet[Theorem~3]{mannor2012lightning} showed that in this special case, optimal randomized policies can be computed efficiently by applying a separation oracle to the standard LP formulation for policy optimization in finite-horizon MDPs.
Note however, that this positive result does not automatically carry over to the computation of deterministic policies, because the optimal worst-case rewards that can be obtained by randomized or deterministic policies, respectively, can differ by an arbitrary factor in the LDST model, as can be seen in the example given in  \cref{fig:example-randomized}. There, the optimal randomized policy takes action $a$ and $b$, respectively, each with probability~$\frac{1}{2}$, achieving an expected reward of~$\frac{1}{2}$. However, the optimal value achievable by any deterministic policy is~$0$.
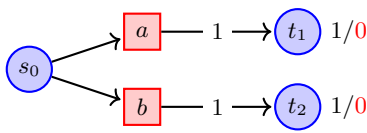
\begin{figure}
\centering
\begin{tikzpicture}[thick, every transition/.style={draw=red,fill=red!20,minimum size=4.8mm}, every place/.style={draw=blue,fill=blue!20,minimum size=6mm}]
\node[place] (s0) {$s_0$};
\path (s0) ++(1.5, 0.5) node[transition] (a) {$a$} edge [pre] (s0); 
\path (s0) ++(1.5, -0.5) node[transition] (b) {$b$} edge [pre] (s0); 
\node[place, right= 1.5cm of a, label={right: $ 1/ \textcolor{red}0 $}] (t1) {$t_1$} edge  [pre] node[pos=0.5, fill=white]{$1$} (a);
\node[place, right= 1.5cm of b, label={right: $ 1/ \textcolor{red}0 $ }] (t2) {$t_2$}  edge [pre]node[pos=0.5,fill=white]{$1$} (b);
					
\end{tikzpicture}
\caption{Example showing that, in the LDST model, the reward of a randomized policy can exceed that of an optimal deterministic policy. Each round node represents a state, each square represent an action. Numbers on the arcs denote the (nominal) transition probabilities. Numbers next to the terminal states indicate nominal (first number, black) and worst-case (second number, red) reward, respectively.}
\label{fig:example-randomized}
\end{figure}

\citet{mannor2012lightning} also consider an \emph{adaptive} version of the LDST model, in which both the policy and the uncertainty realization may depend on the previous history, in particular on the realizations of the random outcomes of actions taken by the policy, and show that optimal policies can be efficiently computed via dynamic programming in this case, a result that is later extended a more general setting called $k$-rectangular uncertainty sets~\citep{mannor2016robust}.

\section{Contributions of this article}
\label{sec:conbtributions}

We study the computation of optimal deterministic policies in the LDST model.
We first consider the case of reward uncertainty. 
To this end, we define the problem {\rewardP} (where DLP stands for \textsc{Deterministic LDST Policy}): We are given an MDP $\langle S, A, p, r, s_0\rangle$ and a budgeted uncertainty set $\hat{\mathcal{Q}}$ for the terminal rewards, and our goal is to find a (deterministic and history-independent) policy $\pi$ maximizing the worst-case reward~$\hat{R}(\pi)$.

An interesting special case of this problem is the two-stage case, where $T=2$, i.e., the state space can be partitioned into the stages $S_0 = \{s_0\}, S_1, S_2$, with $S_2$ consisting of terminal states, and where the uncertainty budget $k$ equals $1$, i.e., only a single terminal reward can deviate from its nominal value in any scenario.
We will refer to this special case of {\rewardP} as {\twostageP}.
Our first result states that even this special case is strongly $N\!P$-hard.

\begin{restatable}{theorem}{thmTwoStageHardness}
\label{thm:2-stage-hardness}
    {\twostageP} is strongly $N\!P$-hard.
\end{restatable}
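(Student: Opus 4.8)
The plan is to reduce from \textsc{3-Partition}, which is strongly $N\!P$-complete: given $3m$ positive integers $a_1, \dots, a_{3m}$ with $\sum_{i=1}^{3m} a_i = mB$ and $B/4 < a_i < B/2$ for all $i$, decide whether $\{1,\dots,3m\}$ can be partitioned into $m$ triples each summing to $B$. Since \textsc{3-Partition} remains hard when the $a_i$ are bounded by a polynomial in $m$, it suffices to give a polynomial-time reduction producing a {\twostageP} instance whose numerical parameters are polynomially bounded in $m$ and $\max_i a_i$; one could alternatively reduce from \textsc{Partition} with $m=2$, but that would only yield weak $N\!P$-hardness.

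Given such an instance, I construct a two-stage MDP as follows. The initial state $s_0$ has a single action $a$ with $p(u_i \mid s_0, a) = a_i/(mB)$ for $i = 1, \dots, 3m$, so that $S_1 = \{u_1, \dots, u_{3m}\}$ contains one intermediate state per integer. The terminal layer is $S_2 = \{t_1, \dots, t_m\}$, one terminal ``bin'' per target triple. Each intermediate state $u_i$ has $m$ actions, the $j$-th of which moves deterministically to $t_j$. Every terminal state has nominal reward $r(t_j) = 1$ and alternative reward $r'(t_j) = 0$, and the uncertainty budget is $k = 1$. All these numbers have bit-length polynomial in the input size, so together with the strong $N\!P$-completeness of \textsc{3-Partition} the reduction will establish strong $N\!P$-hardness.

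It remains to relate solutions. A deterministic (history-independent) policy $\pi$ is forced at $s_0$ and is otherwise equivalent to an assignment $\sigma \colon \{1,\dots,3m\} \to \{1,\dots,m\}$ of integers to bins, where $\sigma(i) = j$ iff $\pi(u_i)$ moves to $t_j$; under $\pi$ the probability that a trajectory ends in $t_j$ is $L_j := \sum_{i : \sigma(i)=j} a_i/(mB)$. Because all probability mass reaches $S_2$ we have $\sum_{j=1}^m L_j = 1$, and because the adversary may lower the reward of exactly one terminal from $1$ to $0$, the worst-case reward is $\hat{R}(\pi) = 1 - \max_j L_j$. By averaging, $\max_j L_j \ge 1/m$, with equality iff $L_j = 1/m$ for all $j$; moreover each $L_j$ is a multiple of $1/(mB)$, so $\max_j L_j > 1/m$ forces $\max_j L_j \ge 1/m + 1/(mB)$ and hence $\hat{R}(\pi) \le 1 - 1/m - 1/(mB)$. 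Therefore there is a policy with $\hat{R}(\pi) \ge 1 - 1/m$ if and only if some assignment makes every bin load equal $1/m$, i.e.\ partitions the integers into $m$ groups summing to $B$ each; the constraint $B/4 < a_i < B/2$ guarantees that such groups are triples. This yields the desired equivalence with a cleanly separated threshold.

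The construction is short; the one point requiring care is the identity $\hat{R}(\pi) = 1 - \max_j L_j$, which is where the $N\!P$-hardness really comes from: in a two-stage MDP all probability mass lands in $S_2$, and a budget of $k = 1$ lets the adversary convert precisely the heaviest terminal load into reward loss, so maximizing the robust reward is exactly minimizing the makespan of an identical-machines scheduling instance. The remaining work is bookkeeping: checking that the constructed parameters stay polynomially bounded so that strong (rather than merely weak) $N\!P$-hardness follows, and that the threshold $1 - 1/m$ is not attainable by any non-optimal policy in a NO-instance.
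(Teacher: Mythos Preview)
Your proposal is correct and follows essentially the same approach as the paper: both reduce from \textsc{3-Partition} by building a two-stage MDP with one intermediate state per integer (reached from $s_0$ with probability proportional to that integer), one terminal ``bin'' state per part with nominal reward $1$ and alternative reward $0$, and deterministic actions assigning integers to bins, so that $\hat{R}(\pi)=1-\max_j L_j$ and the threshold $1-1/m$ is attainable iff a balanced partition exists. Your added observation about the $1/(mB)$ gap between YES- and NO-instances and the explicit discussion of why the reduction preserves strong $N\!P$-hardness are nice touches not spelled out in the paper, but the underlying construction is the same.
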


\cref{thm:2-stage-hardness} follows from a simple reduction from \textsc{3-Partition} that is described in \cref{sec:2-stage-hardness}.
The theorem reveals a striking contrast to the randomized-policy setting, for which optimal policies under reward uncertainty can be computed in polynomial time, even when the number of reward parameters differing from their nominal value is part of the input~\citep[Theorem~3]{mannor2012lightning}.
Our earlier example given in \cref{fig:example-randomized} further reveals that the randomized-policy setting is also not suited as an approximation for optimal deterministic policies.
This raises the question whether {\twostageP} or even {\rewardP} allow for an approximation algorithm.
An \emph{$\alpha$-approximation algorithm} for $\alpha \in (0, 1]$ is a polynomial-time algorithm that computes a policy $\pi$ with $\hat{R}(\pi) \geq \alpha \hat{R}(\pi^*)$, where $\pi^*$ is a policy with maximum worst-case reward for the given instance.
Unfortunately, the following theorem answers this question in the negative for {\rewardP}.

\begin{restatable}{theorem}{thmRewardInapproximability}
\label{thm:reward-inapproximability}
    Unless $P = N\!P$, there is no $\alpha$-approximation algorithm for {\rewardP} for any $\alpha \in (0, 1]$. 
\end{restatable}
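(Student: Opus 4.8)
The plan is to establish a much stronger ``gap'' statement: it is $N\!P$-hard to decide whether a given instance of {\rewardP} admits a deterministic policy of \emph{strictly positive} worst-case reward, or whether \emph{every} deterministic policy has worst-case reward exactly $0$. This suffices, because the worst-case reward of a \emph{fixed} deterministic policy $\pi$ can be computed in polynomial time. Indeed, one first obtains the terminal distribution $q(s) := \prob{s^{\pi}_T = s}$ by propagating probabilities through the layered transition graph; then, with $w(s) := q(s)\,(r(s) - r'(s)) \ge 0$, a worst-case adversary switches the rewards of the (up to) $k$ terminals having the largest values $w(s)$, so that
\begin{align*}
\hat{R}(\pi) \;=\; \sum_{s \in S_T} q(s)\, r(s) \;-\; \max\Bigl\{ \sum_{s \in D} w(s) \;:\; D \subseteq S_T,\; |D| \le k \Bigr\}.
\end{align*}
Consequently, an $\alpha$-approximation algorithm $\mathcal{A}$ for any $\alpha \in (0,1]$ would decide the gap problem: run $\mathcal{A}$ to obtain a policy $\pi$ and evaluate $\hat{R}(\pi)$ via the formula above; on a ``positive'' instance $\hat{R}(\pi) \ge \alpha\, \hat{R}(\pi^*) > 0$, while on a ``negative'' instance $\hat{R}(\pi) \le \hat{R}(\pi^*) = 0$, so the sign of $\hat{R}(\pi)$ decides the instance in polynomial time, whence $P = N\!P$.

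For the gap reduction I would reduce from \textsc{3-Sat}. After a standard preprocessing step we may assume that every variable of the input formula $\phi$ occurs both positively and negatively; let $x_1, \dots, x_n$ be its variables and $C_1, \dots, C_m$ its clauses. Build a two-stage MDP with $S_0 = \{s_0\}$, $S_1 = \{u_1, \dots, u_n\}$ and terminal set $S_2 = \{c_1, \dots, c_m\}$: the unique action at $s_0$ moves to each $u_i$ with probability $\nicefrac{1}{n}$; at $u_i$ there are two actions, where $T_i$ moves to the uniform distribution over $\{c_j : x_i \in C_j\}$ and $F_i$ moves to the uniform distribution over $\{c_j : \bar{x}_i \in C_j\}$ (both sets are nonempty by the preprocessing). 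Every terminal has nominal reward $r(c_j) = 1$ and alternative reward $r'(c_j) = 0$, and the uncertainty budget is set to $k = m - 1$. Observe that this MDP has only two stages but budget $m - 1$, so for $m > 2$ it is not an instance of {\twostageP}; this is consistent with the constant-factor approximation we obtain for {\twostageP}.

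A deterministic policy $\pi$ chooses one action at each $u_i$ and thereby encodes the assignment $\sigma$ that sets $x_i$ to true exactly when $\pi(u_i) = T_i$; by construction $q(c_j) > 0$ precisely when clause $C_j$ is satisfied by $\sigma$, so the support of $q$ equals the set of clauses satisfied by $\sigma$. With $r \equiv 1$ and $r' \equiv 0$ the displayed formula shows that $\hat{R}(\pi)$ is the probability mass lying outside the $m - 1$ heaviest terminals: this is strictly positive iff $q$ is supported on all $m$ terminals, and it is $0$ otherwise. Since moreover $\hat{R}(\pi) \ge 0$ for every $\pi$ (as $r \equiv 1 \ge 0$), we conclude that $\hat{R}(\pi^*) > 0$ if $\phi$ is satisfiable and $\hat{R}(\pi^*) = 0$ if it is not --- exactly the desired gap.

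The conceptual core is the first paragraph: reducing multiplicative inapproximability to the decision question ``is the optimum strictly positive?'', together with the easy fact that $\hat{R}(\cdot)$ is exactly evaluable for a fixed deterministic policy. After that, the reduction is routine; the only point that needs care is to ensure that the optimum is \emph{exactly} $0$---and not merely small---on negative instances, which is why all nominal rewards are nonnegative, the worst-case rewards equal $0$, and the budget is tuned to $k = m - 1$, so that a single unsatisfied clause collapses $\hat{R}(\pi)$ all the way down to $0$.
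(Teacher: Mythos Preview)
Your proof is correct and takes a genuinely different route from the paper's. The paper reduces from \textsc{Vertex-Disjoint Paths} on a layered DAG: the resulting MDP has as many stages as the DAG has layers, each non-initial action is deterministic (one outgoing arc), and the initial action distributes mass $\varepsilon^i/\delta$ to source $s_i$ while terminal $t_i$ carries reward $\delta/\varepsilon^i$; with budget $k=\ell-1$ one shows that $\hat R(\pi)\ge\alpha$ forces $j_i=i$ for all $i$ and hence vertex-disjoint $s_i$--$t_i$ paths, whereas such paths yield $\hat R(\pi)=1$. Your reduction from \textsc{3-Sat} is structurally simpler---a two-stage MDP with uniform transitions and uniform rewards---and makes the ``gap'' mechanism completely transparent: with budget $m-1$ the adversary can kill all but one terminal, so $\hat R(\pi)>0$ iff every clause receives positive mass, i.e., iff the encoded assignment is satisfying. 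Your argument is also more explicit than the paper's about why the output of an $\alpha$-approximation can be \emph{tested} in polynomial time (via the closed-form evaluation of $\hat R(\pi)$), a point the paper leaves implicit. What the paper's approach buys in return is a structural insight---the intrinsic link between {\rewardP} and \textsc{Disjoint Paths}---and hardness already when all actions after the root are deterministic; what your approach buys is that hardness already holds for $T=2$ (albeit with unbounded $k$), nicely complementing the paper's observation that {\twostageP} (which fixes $k=1$) does admit a constant-factor approximation.
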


The proof of \cref{thm:reward-inapproximability}, which is given in \cref{sec:reward-inapproximability}, reveals an intrinsic connection between the \textsc{Disjoint Paths} problem and {\rewardP}.

Given the inapproximability result for {\rewardP}, we turn our attention again to {\twostageP}.
Intuitively, {\twostageP} can be seen as a load-balancing problem in which the goal is to maximize the total reward at the terminal states while keeping the reward that can be lost at any terminal state in the worst case small.
As such, {\twostageP} is interesting in its own right.
We derive the following result, showing that {\twostageP} is more tractable than general {\rewardP} in the sense that polynomial-time algorithms can at least recover a constant fraction of the optimal reward.

\begin{restatable}{theorem}{thmApproximation}
\label{thm:approximation}
	For any $\varepsilon > 0$, there a is a $\frac{1}{5 + \varepsilon}$-approximation algorithm for {\twostageP}.
\end{restatable}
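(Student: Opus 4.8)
The plan is to reformulate {\twostageP} as a bicriteria assignment problem and to solve it by rounding a linear program that \emph{combines} \textsc{Knapsack Cover} inequalities---to certify that enough reward is collected---with a \textsc{Generalized Assignment}-type structure---to bound the worst-case loss over terminal states. Since $A(s_0)$ is given explicitly, we enumerate the (polynomially many) choices of the action at $s_0$ and fix it, which pins down the probabilities $p_s := \prob{s^\pi_1 = s}$ for $s \in S_1$. Writing $\delta(t) := r(t)-r'(t) \ge 0$ for $t \in S_2$, a policy is then a choice of action $a_s \in A(s)$ for each $s \in S_1$; it induces terminal probabilities $q(t) = \sum_{s \in S_1} p_s\, p(t \mid s, a_s)$, and its worst-case reward is $\hat R(\pi) = \langle q, r\rangle - \max_{t \in S_2} q(t)\delta(t)$, the subtracted term being the largest loss a single deviation can inflict. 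Assuming, as is standard, that all rewards are nonnegative, every policy satisfies $\hat R(\pi) = \sum_{t \ne t^\circ} q(t)r(t) + q(t^\circ)r'(t^\circ) \ge 0$ for the attacked terminal $t^\circ$, so it suffices to give a constant-factor guarantee when $\mathrm{OPT} := \hat R(\pi^\ast) > 0$. Letting $q^\ast$, $R^\ast := \langle q^\ast, r\rangle$ and $L^\ast := \max_t q^\ast(t)\delta(t)$ be the terminal probabilities, nominal reward and worst-case loss of an optimal policy $\pi^\ast$ (so $\mathrm{OPT} = R^\ast - L^\ast$), we may, up to a factor $1+\varepsilon$, guess from a polynomial-size geometric grid values $\theta$ with $L^\ast \le \theta \le (1+\varepsilon)L^\ast$ and $\rho$ with $\mathrm{OPT} \le \rho \le (1+\varepsilon)\mathrm{OPT}$, and by enumerating over $S_2$ also assume the attacked terminal $t^\ast$ is known.

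\emph{The combined relaxation and its rounding.} Consider the linear program that selects, fractionally, one action per state $s \in S_1$ (a partition-matroid constraint), subject to load caps $q(t)\delta(t) \le O(\theta)$ for every terminal $t$, and that is required to collect reward $\langle q, r\rangle \ge \rho$. A plain such LP has an unbounded integrality gap---this is exactly the effect behind \cref{fig:example-randomized}, where a fractional (randomized) solution can hide almost all reward in a single split assignment while a deterministic one cannot. We therefore strengthen the reward-collection constraint with \textsc{Knapsack Cover} inequalities, and, for each terminal, we separate the \emph{big} incoming contributions (actions placing more than a constant fraction of the cap $\theta/\delta(t)$ onto $t$) from the \emph{small} ones. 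Each terminal admits only a constant number of big contributions, which can be handled directly inside the LP; the small ones are rounded by a \textsc{Generalized Assignment}/Shmoys--Tardos bipartite-matching argument, which preserves the (Knapsack-Cover-certified) collected reward up to a constant factor and violates each load cap by at most one small contribution. The outcome is a deterministic policy $\pi$ with $\langle q, r\rangle = \Omega(\rho)$ and $\max_t q(t)\delta(t) = O(\theta)$.

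\emph{Assembling the approximation.} We finally split on the size of $L^\ast$ relative to $\mathrm{OPT}$. If $L^\ast$ is at most a suitable constant times $\mathrm{OPT}$, we run the rounding with threshold $\theta' = \Theta(\mathrm{OPT})$: then $\pi^\ast$ is feasible for the corresponding LP, so Knapsack Cover still yields $\langle q, r\rangle = \Omega(\mathrm{OPT})$ while the loss stays $O(\mathrm{OPT})$, and hence $\hat R(\pi) = \Omega(\mathrm{OPT})$. If $L^\ast$ exceeds that threshold, then $R^\ast = \mathrm{OPT} + L^\ast$ is dominated by $L^\ast$, and scaling the probability mass routed to $t^\ast$ down proportionally before re-solving the (now less constrained) assignment produces a policy whose loss is controlled and whose reward, after the single deviation, is still $\Omega(\mathrm{OPT})$. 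Returning the best policy found over all guesses and both regimes, and choosing the constants so the regimes balance, gives $\hat R(\pi) \ge \tfrac{1}{5+\varepsilon}\,\mathrm{OPT}$.

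The main obstacle is precisely the tension between collecting enough reward and capping the loss at a single terminal that must absorb a large probability mass: one cannot simply maximize reward subject to the load caps and round, since that relaxation---equivalently, the randomized policy---may be off by an unbounded factor. Making the Knapsack-Cover strengthening interact cleanly with the Generalized-Assignment rounding, so that enforcing an $O(\theta)$ cap on every terminal costs only a constant factor (rather than a logarithmic one) in the collected reward, together with verifying that the regime $L^\ast \gg \mathrm{OPT}$ admits a good low-loss policy at all, is where the bulk of the technical work lies and what fixes the constant at $5+\varepsilon$.
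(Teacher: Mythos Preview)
Your plan diverges from the paper's argument in a way that leaves the central step unproved. The paper does \emph{not} build a single LP that simultaneously carries a Knapsack-Cover-strengthened reward constraint and Generalized-Assignment load caps. Instead it runs two \emph{independent} subroutines and returns the better policy. In the first, for a guessed attacked terminal $\hat t$ and loss level $L$, the covering constraint is $\sum_{s,a} v^a_{s\hat t}\,x_{sa}\ge L$ (reward \emph{at} $\hat t$, not total reward), and the objective maximizes reward at all other terminals; this is a single-constraint \textsc{Knapsack Cover} instance solved by its FPTAS, and any feasible integer solution $x$ with objective value $V$ immediately gives $\hat R(\pi_x)\ge\min\{V,L\}$, hence $\hat R(\pi_1)\ge\frac{1}{1+\varepsilon}\min\{\hat R(\pi),L(\pi)\}$. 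In the second, the LP is a pure GAP relaxation with load caps and an artificial terminal $t^*$ (assigning $s$ to $t^*$ means ``take the action maximizing reward after dropping its single largest terminal contribution''); Shmoys--Tardos rounding then yields $\hat R(\pi_2)\ge\frac12 R(\pi)-2(1+\varepsilon)L(\pi)$. A three-line case split on whether $L(\pi)\gtrless\frac15\hat R(\pi)$ gives the factor $5+\varepsilon$.

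The gap in your proposal is precisely the interaction you flag as ``where the bulk of the technical work lies'': you never show how a Shmoys--Tardos-style rounding can preserve a covering constraint (even one strengthened with KC inequalities) while also respecting the packing caps up to a constant. KC inequalities control the \emph{integrality gap} of a covering LP; they do not by themselves yield a rounding that keeps coverage when you simultaneously enforce packing constraints of a different structure. Your ``big/small'' sketch does not resolve this: a terminal can receive fractional mass from arbitrarily many actions, and ``handling big contributions inside the LP'' is not a defined operation. Likewise, the regime $L^\ast\gg\mathrm{OPT}$ is where your argument is weakest: ``scaling the probability mass routed to $t^\ast$ down proportionally'' has no algorithmic meaning in a deterministic-policy setting, since the mass at $t^\ast$ is fixed by the chosen actions and cannot be scaled independently. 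Finally, asserting that the constants ``balance'' at $5+\varepsilon$ without deriving the two explicit guarantees above is not a proof; in the paper, the $5$ arises concretely from combining $\min\{\hat R,L\}$ with $\frac12 R-2L$ at the threshold $L=\frac15\hat R$.
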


The algorithm underlying \cref{thm:approximation} is based on a combination of two carefully chosen relaxations that complement each other, with different performances depending on the fraction of nominal reward that is lost in the worst-case scenario. We devote \cref{sec:approximation} to the description of the algorithm.

We now turn our attention to computing deterministic policies in the more general case of LDST with an uncertain transition kernel.
Formally, we consider the problem {\transitionP}, in which we are given an MDP $\langle S, A, p, r, s_0\rangle$ and a budgeted uncertainty set $\hat{\mathcal{P}}$ for the transition kernel, and the goal is to find a policy $\pi$ maximizing the worst-case reward~$\hat{R}(\pi)$.

We show that uncertain transition kernels are indeed significantly harder to handle than the uncertain rewards, by proving the following two theorems on the complexity of {\transitionP}.

\begin{restatable}{theorem}{thmTransitionInapproximability}\label{thm:transition-inapproximability}
	The following problem is $N\!P$-hard: Given an instance of {\transitionP} with uncertainty budget $k = 2$, decide whether there is a policy $\pi$ with $\hat{R}(\pi) > 0$.
\end{restatable}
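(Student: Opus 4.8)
The plan is to reduce from a classical $N\!P$-complete problem whose structure mirrors the role of the uncertainty budget $k=2$: namely \textsc{2-Disjoint Paths} (or, more conveniently, a satisfiability-type problem). I would design an instance of {\transitionP} in which achieving strictly positive worst-case reward requires the policy to "protect" a reward-bearing terminal state against the adversary, who is allowed to corrupt the transition probabilities at exactly $k=2$ state-action pairs. The key idea is to force a situation where every deterministic policy routes probability mass through a small collection of ``vulnerable'' state-action pairs, and the adversary, by attacking $2$ of them, can redirect all mass away from the rewarding terminal state---unless the policy's choices encode a valid solution to the combinatorial problem we reduce from. Concretely, I would build a 3-stage (or constant-stage) MDP where: the initial state $s_0$ has one action leading deterministically to a state $u$; at $u$, nominal transitions split mass among several ``gadget entry'' states; in each gadget, the policy picks an action encoding a Boolean choice; and finally all paths corresponding to ``consistent'' choices funnel into a single terminal state with reward $1$, while all other terminals have reward $0$. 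The adversary's local uncertainty set at each attackable state-action pair allows it to divert that pair's outgoing probability entirely to a $0$-reward sink.

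The main steps, in order, are: \textbf{(1)} Choose the source problem---I would use \textsc{Monotone 1-in-3 SAT} or directly \textsc{Vertex Cover}/\textsc{Independent Set}, since the budget $k=2$ naturally corresponds to the adversary ``hitting'' at most two objects. Actually, the cleanest choice is probably a reduction from \textsc{2-Disjoint Paths} or from a problem where the question ``is the optimum $\geq$ something'' becomes ``is there a policy surviving all $\binom{m}{2}$ adversary responses.'' \textbf{(2)} Construct the MDP gadgets so that a deterministic policy corresponds to a candidate solution and the set of state-action pairs carrying positive flow under that policy corresponds to the ``objects'' the adversary may attack. \textbf{(3)} Show completeness: if the source instance is a YES-instance, the corresponding policy spreads probability so that no two attacks can reduce the reward to $0$, giving $\hat{R}(\pi) > 0$. \textbf{(4)} Show soundness: if the source instance is a NO-instance, then for every deterministic policy there exist two state-action pairs whose simultaneous corruption reroutes all probability mass to $0$-reward terminals, so $\hat{R}(\pi) = 0$. \textbf{(5)} Verify that the construction is polynomial in size and that the local uncertainty sets $\hat{\mathcal{P}}_{sa}$ are simple (e.g., singletons plus one deviated distribution), so the hardness is not an artifact of a complicated uncertainty set.

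The hard part will be step~(4), the soundness direction: I need the gadget to guarantee that \emph{any} deterministic policy that is not ``globally consistent'' admits a $2$-attack killing all reward, while simultaneously ensuring in step~(3) that consistent policies are robust against all $2$-attacks. This is a delicate balance---the adversary has $\binom{m}{2}$ possible moves and must fail against good policies but succeed against every bad one---so the gadget must be engineered so that the reward-$1$ terminal is reachable via at least three ``independent'' bottleneck pairs exactly when the encoded solution is valid (so that removing any two still leaves positive flow), and via at most two otherwise. I expect the cleanest way to enforce this ``at least three disjoint witnesses'' property is to route, for a valid solution, probability mass along three internally vertex-disjoint paths to the rewarding terminal, which makes the connection to \textsc{Disjoint Paths}--style arguments (as already exploited in the proof of \cref{thm:reward-inapproximability}) the natural technical backbone. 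A secondary subtlety is ensuring the policy cannot ``cheat'' by sending negligible but positive mass to the good terminal through an un-attackable route; this is handled by making the only route to positive reward pass through attackable pairs, and by choosing the deviated distributions to zero out that pair's contribution entirely.
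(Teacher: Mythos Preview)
Your proposal is a high-level plan rather than a proof, and it diverges from the paper's actual construction in a way that leaves the central difficulty unresolved. The paper reduces from \textsc{3-SAT}, not from \textsc{Disjoint Paths} or \textsc{Vertex Cover}, and the MDP it builds is \emph{not} constant-stage: it consists of two long sequential chains---a ``clause chain'' $s_{c_1}\to\cdots\to s_{c_{m+1}}$ and a ``variable chain'' $s_1\to\cdots\to s_{n+1}$---each entered from $s_0$ with probability~$\tfrac12$. The decisive structural idea, which your ``three internally disjoint paths'' picture does not capture, is that the two chains are \emph{not} disjoint: they share a layer of literal states $s_y$ for $y\in L$. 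The clause chain can be pushed into $s_y$ by one adversarial deviation (at $s_{c_i}$, redirecting to the literal the policy selected for clause $i$), while the variable chain enters some $s_y$ through the policy's own choice at $s_j$. Because a deterministic policy must fix a \emph{single} action at each shared $s_y$, these two uses of $s_y$ interact, and this interaction is exactly what encodes the consistency of a truth assignment.

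Concretely, the paper's soundness argument (your step~(4)) works as follows: if the literals chosen at the clause states conflict with the choices along the variable chain, there is a literal state $s_y$ that both chains traverse with action~$a$; the adversary spends one unit of budget to push the clause chain into $s_y$ and the second unit to redirect $(s_y,a)$ to the zero-reward sink, killing both halves at once. Conversely, a satisfying assignment lets the policy set $\pi(s_y)=a'$ (a detour to a buffer state~$d$) exactly for the literals it uses at clauses, and route the variable chain through the complementary literals with action~$a$; any adversarial redirect into the literal layer then forces the adversary to spend its second unit at~$d$, leaving the other chain intact. Your plan correctly flags soundness as the crux, but the mechanism you sketch---three disjoint bottlenecks for YES instances, at most two for NO instances---would require encoding an $N\!P$-hard property as ``existence of three disjoint routes selectable by a deterministic policy,'' and you do not supply such an encoding. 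The paper sidesteps this entirely by using only \emph{two} parallel branches and letting the \emph{shared} states carry the combinatorial load; recognising that sharing, not disjointness, is the right lever here is the missing idea in your proposal.
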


\begin{restatable}{theorem}{thmTransitionSigma}\label{thm:transition-sigma-2-p}
	{\transitionP} is $\Sigma_{2}^{p}$-hard. 
\end{restatable}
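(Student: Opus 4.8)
The plan is to reduce from the standard $\Sigma_2^p$-complete problem $\exists\,\forall$-\textsc{3-Dnf-Sat}: given a formula $\Phi = \exists x_1\cdots x_n\ \forall y_1\cdots y_m\ \varphi(\vec x,\vec y)$ whose matrix $\varphi = D_1\vee\cdots\vee D_\ell$ is a disjunction of terms, each $D_j$ the conjunction of three literals, decide whether some assignment to $\vec x$ is such that every assignment to $\vec y$ makes at least one term true. From such a $\Phi$ I would build a {\transitionP} instance on an acyclic, layered MDP organised into three conceptual blocks. In the \emph{existential block} the deterministic policy traverses a chain of states at which, through its action choices, it picks the truth values of $x_1,\dots,x_n$; since the chain is traversed unconditionally, the restriction of $\pi$ to these states is a polynomial-size encoding of an assignment $\vec x$. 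In the \emph{universal block} each of several states carries a single action whose nominal transition has a prescribed alternative in its local uncertainty set, so that the worst-case kernel, by deciding which of at most $k$ of these transitions to replace, encodes an assignment $\vec y$; the budget $k$ would be chosen as a suitable polynomial in $n+m+\ell$, just large enough for the intended adversarial strategy and no larger. In the \emph{verification block} the \emph{nominal} (probabilistic) transitions fan the token out uniformly over the $\ell$ terms, and the sub-gadget reached in the branch for $D_j$ routes the token to a reward-$1$ terminal if all three literals of $D_j$ come out true under the encoded $(\vec x,\vec y)$, and to a reward-$0$ terminal otherwise.

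Granting for the moment that the encodings behave as intended, every policy together with every kernel in the uncertainty set then yields a realised reward equal to a fixed positive constant times the number of terms satisfied by the corresponding assignment pair, so that $\hat{R}(\pi)$ equals that constant times the minimum over $\vec y$ of the number of terms satisfied by $(\vec x,\vec y)$, where $\vec x$ is the assignment encoded by $\pi$. Hence $\max_\pi \hat{R}(\pi) > 0$ if and only if $\Phi$ is true: a policy encoding a witnessing $\vec x$ keeps some term alive against every adversarial $\vec y$, whereas if $\Phi$ is false then against any policy the adversary can commit to a $\vec y$, together with a refuting literal for each term, that kills every branch. Since the construction has polynomial size, $\Sigma_2^p$-hardness follows.

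The step I expect to be the main obstacle is exactly the proviso just made: ruling out that the encodings are ``abused''. One must argue that, without loss of generality, the optimal policy commits to a \emph{single, consistent} assignment $\vec x$ and reports each variable the same way in every term-gadget, rather than reporting, say, $x_a$ as true in a term containing the literal $x_a$ and as false in a term containing $\bar x_a$; and, symmetrically, that the worst-case kernel cannot gain from a pattern of transition replacements that corresponds to no single consistent assignment $\vec y$ (refuting $y_u$ in one term and $\bar y_u$ in another). ``Remembering'' a committed assignment inside the token's state would make the MDP exponentially large, so consistency has to be imposed through auxiliary consistency gadgets attached to the uniform fan-out, with the fan-out probabilities and the budget $k$ tuned so that any inconsistent behaviour is strictly dominated for the policy and infeasible within budget for the adversary. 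Making this balance close --- certifying that the inner minimum over the uncertainty set and the outer maximum over policies are attained at the intended, ``honest'' objects --- is where I expect the real work to be; the two implications above and the size estimate are then routine.
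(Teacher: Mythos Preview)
Your high-level plan --- use policy choices for the existential quantifier and budget-limited transition deviations for the universal quantifier --- is the natural first idea, and you have correctly isolated the crux: preventing both players from ``cheating'' by committing to inconsistent assignments across the term gadgets. But the proposal stops precisely where the real construction has to begin. For the policy side one can try to enforce consistency by routing every occurrence of $x_a$ through a single shared state (a Markovian deterministic policy must act identically there), but in a layered MDP all branches then merge after that state and the term-specific context is lost; keeping term-indexed copies $s_{x_a}^{(j)}$ instead reintroduces the inconsistency you wanted to rule out. For the adversary the situation is worse: if each term gadget carries its own copy of the relevant $y$-check, then an inconsistent adversary can falsify \emph{any} term containing a $y$-literal regardless of $\vec x$, which trivialises the reduction, and a budget $k$ calibrated to the number $m$ of variables says nothing about the multiset of \emph{occurrences}. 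Your proposed fix --- consistency gadgets plus tuned fan-out probabilities and budget so that inconsistency is ``strictly dominated / infeasible'' --- is exactly the construction that must be exhibited, and nothing in the proposal indicates how to do it; this is a genuine gap, not a routine detail.

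The paper avoids all of this by reducing from a different $\Sigma_2^p$-complete source, \textsc{Max-Min Vertex Cover}: a graph whose vertex set is partitioned as $\{V_{i,j}\}_{i\in I,j\in J}$, and one asks whether some selector $t:I\to J$ makes the subgraph induced on $\bigcup_i V_{i,t(i)}$ have minimum vertex cover at least $\ell$. The selector is encoded by a single action choice at one state $s_i$ per $i\in I$, so policy consistency is automatic. The adversary's inner problem is to exhibit a small vertex cover: uncertain deterministic transitions let each edge-state $s_e$ be diverted to one of its endpoint-states $s_v$, and each $s_v$ be diverted from the reward-$1$ terminal to the reward-$0$ terminal; with budget $k=m+\ell-1$ the adversary can redirect all $m$ edges of the induced subgraph only if at most $\ell-1$ vertex-states suffice to absorb them. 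The point is that the inner object is already a \emph{set of at most $k$ deviations}, matching the LDST adversary exactly, so no assignment-consistency issue ever arises. What your approach is missing is not a gadget but a source problem whose universal quantifier has this ``choose at most $k$ things'' shape.
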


The proofs of \cref{thm:transition-inapproximability,thm:transition-sigma-2-p} are discussed in \cref{sec:transition-inapproximability,sec:transition-sigma-2-p}.
Note that \cref{thm:transition-inapproximability} implies that there is no approximation algorithm for {\transitionP}, even when only two state-action pairs can be affected by the uncertainty at any given time.
Moreover, the $\Sigma_{2}^{p}$-hardness postulated in \cref{thm:transition-sigma-2-p} implies that we cannot even hope to formulate {\transitionP} as a (mixed) integer program of polynomial size~\citep{woeginger2021trouble}, unless the polynomial hierarchy collapses to its second level (which is considered extremely unlikely).

\begin{remark}\label{rem:wlog}
    Before we turn to the proofs of the results mentioned above, we remark that some of the modelling assumptions we made are without loss of generality in the context of these results.
    
    First, the assumption that rewards are only obtained at the terminal states rather than associated with each action (or state-action pair) is without loss of generality:
    In {\twostageP}, we can model rewards at actions by introducing for each action $a \in A(s)$ for $s \in S_1$ an additional terminal state $s_a$ with $r(s_a) = \frac{r(a)}{\varepsilon}$ for some fixed $\varepsilon > 0$ and replacing the transition probabilities for $a$ by $p'(s_a|s,a) = \varepsilon$ and $p'(s'|s,a) = (1 - \varepsilon)p(s'|s,a)$ for all $s' \in S_2$; we further scale up all original terminal rewards (if present) by $\frac{1}{1 - \varepsilon}$. 
    For rewards of an action $a \in A(s_0)$, we likewise introduce a terminal state $s_a$ with $r(s_a) = \frac{r(a)}{\varepsilon}$ to $S_2$ and an intermediate state $s'_a$ to $S_1$ such that $s'_a$ has a single action deterministically leading to $s_a$ and change the transition probabilities at $a$ to $p'(s'_a|s_0,a) = \varepsilon$ and $p'(s'|s_0,a) = (1 - \varepsilon)p(s'|s_0,a)$ for all $s'\in S_1$; we further scale up all original rewards and rewards corresponding to actions in $S_1$ by $\frac{1}{1 - \varepsilon}$.
    The resulting instance with terminal rewards behaves identically to the original instance with rewards at actions.
    Conversely, terminal rewards can be easily modeled as rewards by actions by introducing a single action at each terminal state action that yields the reward and leads to a new artificial terminal state.
    Hence our complexity results for {\rewardP} and {\transitionP} are also valid for the case of reward at actions.

    Second, while we only consider finite-horizon MDPs in this paper, our complexity results for {\rewardP} and {\transitionP} generalize to the infinite horizon case, where the terminal states simply become absorbing states.
\end{remark}

\section{Complexity for LDST with reward uncertainty}
\label{sec:reward}

In this section we describe the reductions that prove $N\!P$-hardness for {\twostageP} (\cref{thm:2-stage-hardness}) and the inapproximability of {\rewardP} (\cref{thm:reward-inapproximability}), respectively.
We start by proving \cref{thm:2-stage-hardness}, which we restate here for convenience.

\label{sec:2-stage-hardness}

\thmTwoStageHardness*

\begin{figure} 
		\centering 
		\scalebox{0.8}{\begin{tikzpicture}[yscale=-1.25,xscale=1.2,thick,
			every transition/.style={draw=red,fill=red!20,minimum size=4.8mm},
			every place/.style={draw=blue,fill=blue!20,minimum size=6mm}]
			\node[place] (s0)at (-1,0) {$s_0$};
			\node[place] (s1) at (2,-1.5) {$s_1$};
			\node[place, label=right:{$ 1/ \textcolor{red}0 $}] (t1) at (6,-1.5) {$t_1$};
			\foreach \i in {2} {
				\node[place] (s\i) at (2,-3+1.5*\i) {$s_\i$};
				\node[place, label=right:{$ 1/ \textcolor{red}0 $}] (t\i) at (6,-3+1.5*\i) {$t_\i$};
			}
			\node[place] (s{3n}) at (2,1.6) {$s_{3n}$};
			\node[place,label=right:{$ 1/ \textcolor{red}0 $}] (t{n}) at (6,1.6) {$t_{n}$};
			\path (s2) --++ (0,1.2) node[midway,scale=1.5] {$\vdots$};
			\path (t2) --++ (0,1.2) node[midway,scale=1.5] {$\vdots$};
			\node[transition] at (0.5,0) {$ a_0 $}  edge [pre] (s0) edge [post] node[fill=white]{$\frac{b_1}{nB}$} (s1) edge [post] node[fill=white]{$\frac{b_2}{nB}$} (s2) edge [post] node[fill=white]{$\frac{b_{3n}}{nB}$} (s{3n}); 
			\foreach \i in {1,2} {
				\node[transition] (m\i) at (3.5,-2.5+0.5*\i) {$a_\i$}  edge [pre] (s1)   edge [post]   (t\i);
			}
			\node[transition] (m3) at (3.5,-0.6) {$a_n$} edge  [pre] (s1)  edge [post]  (t{n});
			
			\path (m2) --++ (0,0.5) node[midway,scale=1.5] {$\vdots$};
			\path (m3) --++ (0,1.2) node[midway,scale=1.5] {$\vdots$};
			\foreach \i in {1} {
				\node[transition] (m3\i) at (3.5, 0.6+0.5*\i) {$a_\i$} edge [pre] (s{3n})  edge [post]  (t\i);
			}
		\foreach \i in {2} {
			\node[transition] (m3\i) at (3.5, 0.6+0.5*\i) {$a_\i$} edge [pre] (s{3n})  edge [post]  (t\i);
		}
			\node[transition] (m33) at (3.5,2.5) {$a_n$} edge [pre] (s{3n})  edge [post]  (t{n}); 
			\path (m32) --++ (0,0.5) node[midway,scale=1.5] {$\vdots$};
			\foreach \i in {1,2} {
				\node[transition] (a{\i}) at (3.5,-2.5+0.5*\i) {$a_\i$};
			}
		\end{tikzpicture}}
		\caption{Construction of the reduction from \textsc{3-Partition} to {\twostageP}.}
    \label{fig:2-stage-hardness}
\end{figure}
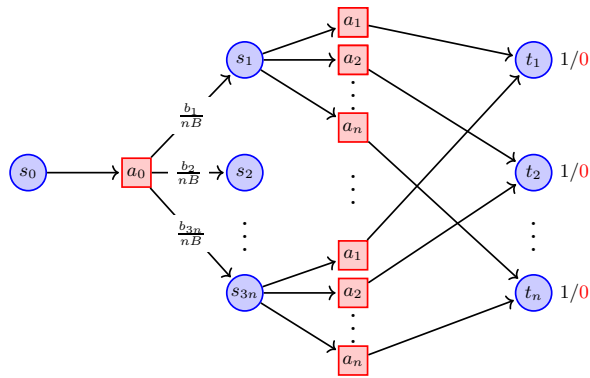

\begin{proof}
    We prove the theorem by reduction from \textsc{3-Partition}.
    Thus, we are given $b_1, \dots, b_{3n} \in \mathbb{N}$ and $B \in \mathbb{N}$ with $\sum_{i=1}^{n} b_i	= nB$, and our goal is to decide whether there is a partition $U_1, \dots, U_n$ of $U:=\{1, \dots, 3n\}$ with $ \sum_{j \in U_i} b_j = B$ for all $i \in N:= \{1, \dots, n\}$.
    We construct an instance of {\twostageP} as follows.
    The state space is $S = S_0 \cup S_1 \cup S_2$ with 
    \begin{align*}
        S_0 := \{s_0\}, \quad S_1 := \{s_i:\; i\in U\}, \quad S_3 := \{t_j:\;j\in N\}.
    \end{align*}
    The sets of actions are
        $A(s_0) := \{a_0\} \text{ and } A(s_i) := \{a_j:\; j\in N\}  \text{ for } i \in U$.
    The transition probabilities are given by
    $p(s_i|s_0, a_0) := \frac{b_i}{nB}$ for $i \in U$
    and $p(t_j|s_{i}, a_j) := 1, \text{ for } i\in U, ~j\in N$. 
    All other transition probabilities are $0$.
    For each $j \in N$, the nominal reward at the corresponding terminal state is $r(t_j) := 1$ and the alternative reward $\hat{r}(t_j) := 0$.
    See \cref{fig:2-stage-hardness} for a depiction of the entire construction.
    We show that there is a partition $U_1, \dots, U_n$ of $U$ with $ \sum_{j \in U_i} b_j = B$ for all $i \in N$ if and only if there is a policy $\pi$ with $\hat{R}(\pi) = 1 - \frac{1}{n}$.
    
    Note that any policy in the constructed instance is completely determined by the actions it takes for the states in $S_1$.
    Hence any policy $\pi$ induces a partition of $U$ given by $U^{\pi}_j := \{i \in U \,:\, \pi(s_i) = a_j\}$ for $j \in N$.
    Conversely, for every partition of $U$ into $n$ sets $U_1, \dots U_n$, there is a policy $\pi$ such that $U_j=U^{\pi}_j$ for all $j \in N$.
    It thus suffices to show that $\hat{R}(\pi) = 1 - \frac{1}{n}$ for policy $\pi$ if and only if $\sum_{i\in U^{\pi}_j} b_i = B$ for all $j \in N$.
    
    For some policy $\pi$ and $j \in N$ let
    \begin{align} 
        \textstyle p^{\pi}(t_j) := \prob{s_T^{\pi} = t_j} =\sum_{i\in U : \pi(s_i) = a_j} \frac{b_i}{nB} = \sum_{i\in U^{\pi}_j} \frac{b_i}{nB},\label{eq:partition-value}
    \end{align}
    denote the probability that the process ends at state $t_j$.
	Note that 
    \begin{align*}
        \hat{R}(\pi) = \min_{j' \in N} \sum_{j \in N \setminus \{j'\}} r(t_j) p^{\pi}(t_j) = \min_{j' \in N}  1 - p^{\pi}(t_{j'}) = 1 - \max_{j' \in N} p(t_{j'}).
    \end{align*}
    Note further that $\max_{j' \in N} p^{\pi}(t_{j'}) \geq \frac{1}{n} \sum_{j \in N} p^{\pi}(t_j) = \frac{1}{n}$, with equality if and only if $p^{\pi}(t_{j'}) = \frac{1}{n}$ for all $j \in N$.
    By \eqref{eq:partition-value}, this is the case if and only if $\sum_{i\in U^{\pi}_j} b_i = B$ for all $j \in N$. 
    We conclude that $\hat{R}(\pi) = 1 - \frac{1}{n}$ if and only if $\sum_{i\in U^{\pi}_j} b_i = B$ for all $j \in N$.
\end{proof}

\label{sec:reward-inapproximability}

We now prove \cref{thm:reward-inapproximability}. 

\thmRewardInapproximability*
\begin{proof}
    By contradiction assume that there is an $\alpha$-approximation for {\rewardP} for some $\alpha \in (0, 1]$.
    We show that this implies $P = N\!P$ by reduction from \textsc{Vertex-Disjoint Paths}.
    
    Given a directed graph $ D = (V, E) $ and a set of pairs of vertices $ (s_1, t_1), \dots, (s_{\ell},t_{\ell})$, the \textsc{Vertex-Disjoint Paths} problem asks whether there exists a set of vertex-disjoint paths $P_1, \dots, P_{\ell}$ such that every $P_i$ is an $s_i$-$t_i$-path for all $i \in L := \{1, \dots, \ell\}$.
When $\ell$ is part of the input, this problem is NP-hard, even when the underlying digraph $D$ does not contain directed cycles~\citep{even1976complexity}.
We thus assume that $D$ is a directed acyclic graph (DAG).

We further assume that $D$ has a layered structure where the set of vertices $V$ is partitioned into $V_1, \dots, V_{T}$ for some $T \in \mathbb{N}$ such that $V_1 = \{s_1, \dots, s_{\ell}\}$, $V_{T} = \{t_1, \dots, t_{\ell}\}$ and $e = (u, v) \in E$ implies $u \in V_{\tau}$ and $v \in V_{\tau+1}$ for some $\tau \in \{1, \dots, T\}$.
This assumption is without loss of generality as $D$ is a DAG, so we can partition the original vertex set according to a topological ordering and then subdivide all edges to ensure they do not skip any layers.

We construct an instance of {\rewardP} as follows. 
The state space is $S = V \cup \{s_0\}$.
The action sets are 
 $A(s_0) = \{a_0\}$ and $A(v)=\{a_e:\; e\in \delta^+(v)\}$ for $v\in V \setminus \{t_1,\dots,t_k\}$.
Let $0 < \varepsilon < \alpha$ and $\delta = \sum_{i=1}^{\ell} \varepsilon^i$. The transition probabilities for action $a_0$ are given by $p(s_i|s_0,a_0) = \frac{\varepsilon^{i}}{\delta}$ for $i \in L$.
Each action $a_e$ for $e = (v, w) \in A$ deterministically leads from state $v$ to state $w$, i.e., $p(w | v, a_e) = 1$.
The nominal and worst-case rewards, respectively, for the terminal states $t_1, \dots, t_{\ell}$ are given by $r(t_i) = \frac{\delta}{\varepsilon^i}$ and $r'(t_i) = 0$, respectively, for $i \in L$.
The number of states for which the reward can deviate is given by $k = \ell - 1$. An example of the construction is visualized in \cref{k-DVP}.
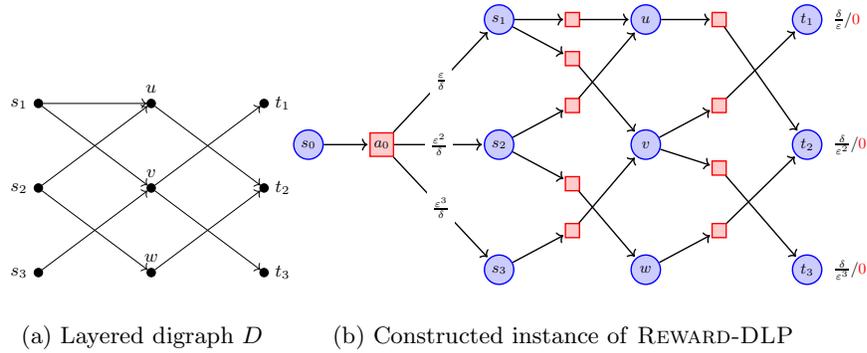
\begin{figure}[htbp]
\centering
\begin{minipage}[b]{0.3\textwidth}
\centering
 \scalebox{0.75}{\begin{tikzpicture}
    \node[circle, draw, fill=black, inner sep=1.5pt, label=left:$s_1$] (s1) at (0,1.5) {};
    \node[circle, draw, fill=black, inner sep=1.5pt, label=above:$u$] (u) at (2,1.5) {};
    \node[circle, draw, fill=black, inner sep=1.5pt, label=right:$t_1$] (t1) at (4,1.5) {};
    \node[circle, draw, fill=black, inner sep=1.5pt, label=left:$s_2$] (s2) at (0,0) {};
    \node[circle, draw, fill=black, inner sep=1.5pt, label=above:$v$] (v) at (2,0) {};
    \node[circle, draw, fill=black, inner sep=1.5pt, label=right:$t_2$] (t2) at (4,0) {};
    \node[circle, draw, fill=black, inner sep=1.5pt, label=left:$s_3$] (s3) at (0,-1.5) {};
    \node[circle, draw, fill=black, inner sep=1.5pt, label=above:$w$] (w) at (2,-1.5) {};
    \node[circle, draw, fill=black, inner sep=1.5pt, label=right:$t_3$] (t3) at (4,-1.5) {};
    
    \draw[->] (s1) -- (u);
    \draw[->] (s1) -- (v);
    \draw[->] (s2) -- (u);
    \draw[->] (s2) -- (w);
    \draw[->] (s3) -- (v);
    \draw[->] (u)--(t2);
    \draw[->] (v)--(t3);
    \draw[->] (v)--(t1);
    \draw[->] (w) -- (t2);
\end{tikzpicture}}
\vspace{0em} 

\footnotesize{(a) Layered digraph $D$}
\end{minipage}
\begin{minipage}[b]{0.6\textwidth}
\centering
 \scalebox{0.65}{
\begin{tikzpicture}[yscale=-1.6,xscale=1.5,thick,
			every transition/.style={draw=red,fill=red!20,minimum size=4.8mm},
			every place/.style={draw=blue,fill=blue!20, minimum size=6mm}, 
   			trans/.style={opacity=0.0, minimum size=6mm},
            dot/.style={inner sep=0pt, fill=black,circle,minimum size=3pt},
			 every fit/.style={draw,inner sep=4pt}]
			\tikzset{every loop/.style={min distance=6mm,in=-60,out=60,looseness=6}}
			every transition2/.style={draw=yellow,fill=yellow!20,minimum size=4.8mm},
             \node[trans](s01) at (-4.8,-0.5){};
             \node[trans](s02) at (-4.8,2){};
			\node[place] (s0) at (-4.8,0.7) {$s_0$};
			\node[place] (s1) at (-2.2,-0.9) {$s_1$};
                \node[place] (s2) at (-2.2,0.7) {$s_2$};
			\node[place] (s3) at (-2.2,2.3) {$s_{3}$};
			\node[place, label=right:${~\frac{\delta}{\varepsilon}/\textcolor{red}0}$] (t1) at (2,-0.9) {$t_1$};
			\node[place,  label=right:${~\frac{\delta}{\varepsilon^2}/ \textcolor{red}0}$] (t2) at (2,0.7) {$t_2$};
                \node[place,label=right:${~\frac{\delta}{\varepsilon^3}/\textcolor{red}0}$] (t3) at (2,2.3) {$t_3$};
			\node[transition] at (-3.8,0.7) {$ a_0 $} edge [pre] (s0) edge [post] node[fill=white]{$ \frac{\varepsilon }{\delta}$} (s1) edge [post]  node[fill=white]{$ \frac{{\varepsilon}^2}{\delta} $} (s2) edge [post]  node[fill=white]{$\frac{\varepsilon^3}{\delta}$} (s3);
            \node[place] (v1) at (-0.2,-0.9) {$u$};
            \node[place] (v2) at (-0.2,0.7) {$v$};
            \node[place] (v3) at (-0.2,2.3) {$w$};
            \node[transition, scale=0.6] (a11) at (-1.2,-0.9) {}  edge [pre] (s1) edge [post] (v1);
            \node[transition, scale=0.6] (a12) at (-1.2,-0.4) {}  edge [pre] (s1) edge [post] (v2);
            \node[transition, scale=0.6] (a21) at (-1.2,0.2) {}  edge [pre] (s2) edge [post] (v1);
            \node[transition, scale=0.6] (a22) at (-1.2,1.2) {}  edge [pre] (s2) edge [post] (v3);
            \node[transition, scale=0.6] (a31) at (-1.2,1.8) {}  edge [pre] (s3) edge [post] (v2);
            \node[transition, scale=0.6] (a41) at (0.8,-0.9) {}  edge [pre] (v1) edge [post] (t2);
            \node[transition, scale=0.6] (a42) at (0.8,1) {}  edge [pre] (v2) edge [post] (t3);
            \node[transition, scale=0.6] (a51) at (0.8,0.2) {}  edge [pre] (v2) edge [post] (t1);
            \node[transition, scale=0.6] (a61) at (0.8,1.8) {}  edge [pre] (v3) edge [post] (t2);
\end{tikzpicture}}
\vspace{0em} 

\footnotesize{(b) Constructed instance of {\rewardP}}
\end{minipage}
\smallskip
\caption{{The construction of the reduction from \textsc{$3$-Vertex-Disjoint Paths} problem in $D$ to {\rewardP} is shown in this figure, where the actions in each state correspond to the edges that leave from the respective node in $D$ and $\delta=\varepsilon+\varepsilon^2+\varepsilon^3$. Note that in the example, no pair of vertex-disjoint $s_1$-$t_1$- and $s_3$-$t_3$-paths exists in $D$ and similarly, as any such paths would intersect at node $v$.
Similarly, no policy can reach both terminal states $t_1$ and $t_3$ with positive probability, as only one of the actions can be chosen in state $v$, deterministically leading to either $t_1$ or $t_3$ from that state.}} \label{k-DVP}
\end{figure}

Next we show that there is a policy $\pi$ achieving worst-case reward $\hat{R}(\pi) = 1$ if the \textsc{Vertex-Disjoint Path} instance is a `yes' instance and that no policy can achieve worst-case reward larger than $\varepsilon < \alpha$ if the \textsc{Vertex-Disjoint Path} instance is a `no' instance. Hence an $\alpha$-approximation for {\rewardP} would be able to solve \textsc{Vertex-Disjoint Path} in polynomial time, implying $P = N\!P$.

Consider any policy $\pi$ and observe that $\pi$ prescribes for each vertex $v \in V$ an outgoing arc $e \in \delta^{+}(v)$ with $\pi(v) = a_e$. 
Hence, for each $i \in L$, there is a unique path $P_i$ starting at $s_i$, following the arcs prescribed by $\pi$ and ending in $t_{j_i}$ for some $j_i \in L$.
Note that if the process reaches state $s_i$ for $i \in L$ under policy $\pi$, then it deterministically follows the prescribed path $P_i$ ending in state $t_{j_i}$.
From this, we can see that the probability that under policy $\pi$ the process terminates in state $t_j$ for $j \in L$ is $p_{j} := \sum_{i \in L : i_j = j} p(s_i|s_0, a_0)$. 
As there are $\ell$ terminal states and the rewards of $k = \ell - 1$ of them can drop to $0$ simultaneously, the worst-case reward of policy $\pi$ is given by
$\hat{R}(\pi)=\min\big\{p_1 \frac{\delta}{\varepsilon}, \dots, p_{\ell} \frac{\delta}{{\varepsilon}^{\ell}}\big\}$.

Now assume that $\hat{R}(\pi) \geq \alpha$.
This implies that $p_j \geq \frac{\varepsilon^j}{\delta} \alpha > \frac{\varepsilon^{j+1}}{\delta}$ for all $j \in L$.
In particular, for each $j \in L$ there must be at least one $i \in L$ with $j_i = j$. 
Thus, by counting, for each $j \in L$ there is a unique $i_j \in L$ with $j_{i_j} = j$.
But then $\frac{\varepsilon^{j+1}}{\delta} < p_j = \frac{\varepsilon^{i_j}}{\delta}$, which implies that $i_j \leq j$.
From this we conclude that $i_j = j$ for all $j \in L$, or in other words, each path $P_i$ for $i \in L$ is an $s_i$-$t_i$-path.
Moreover, since $j_i \neq j_{i'}$ for $i \neq i'$, the paths $P_i$ and $P_{i'}$ must be vertex-disjoint (if they intersected at a common vertex, they would follow the same arcs from that vertex onward and end at the same $t_j$).
So if there is a policy $\pi$ with $\hat{R}(\pi) \geq \alpha$ then the \textsc{Vertex-Disjoint Path} instance is a `yes' instance.

Conversely, the above arguments imply that if there are vertex disjoint $s_i$-$t_i$-paths $P_1, \dots, P_{\ell}$, then the policy defined by $\pi(v) = a_e$, where $e$ is the unique outgoing arc of $v$ on $P_i$ if $v \in V(P_i)$ or an arbitrary outgoing arc otherwise, yields a worst-case reward of $\hat{R}(\pi) = \min \{p_1 \frac{\delta}{\varepsilon}, \dots, p_{\ell}\frac{\delta}{\varepsilon^\ell}\} = 1$ because $j_i = i$ for all $i \in L$ for this policy.
\end{proof}

\section{Approximation for \textsc{2-Stage Robust MDP}}
\label{sec:approximation}

In this section, we present an approximation algorithm for \textsc{2-Stage Robust MDP}, proving \cref{thm:approximation}:

\thmApproximation*

We first provide an overview of the algorithm in \cref{sec:approximation-overview} and state worst-case guarantees for two subroutines of the algorithm based on two different relaxations, which together yield the desired approximation factor of our algorithm.
We then describe the two relaxations for the problem and the corresponding subroutines of the algorithm in detail in \cref{sec:approximation-knapsack,sec:approximation-gap}, respectively.

\subsection{Overview of the algorithm}
\label{sec:approximation-overview}

Our approximation algorithm for \textsc{2-Stage Robust MDP} is based on two subroutines, which in turn are based two relaxations of the problem, one using a \textsc{Knapsack Cover} instance and one using a \textsc{Generalized Assignment} instance, respectively.
To analyze these two subroutines, we define the \emph{loss} of a policy $\pi$ as 
\begin{align*}
    L(\pi) := R(\pi) - \hat{R}(\pi),
\end{align*}
i.e., the drop in reward achieved by the policy in a worst-case scenario compared to its nominal reward.
As we will see, the first subroutine provides a good approximation when the loss in an optimal policy is a significant fraction of its nominal reward, while the other provides a good approximation when the loss is much smaller than the nominal reward.
We formalize this insight in the following two theorems that will prove in \cref{sec:approximation-knapsack,sec:approximation-gap}, respectively.

\begin{restatable}{theorem}{thmAlgoKnapsack}\label{thm:algo-knapsack}
	For any $\varepsilon > 0$, there is an algorithm that, given an instance of \textsc{2-Stage Robust MDP Policy} computes in polynomial time a policy $\pi_1$ with $\hat{R}(\pi_1) \geq  \frac{1}{1+\varepsilon} \min \{\hat{R}(\pi), L(\pi)\}$ for all policies $\pi$.
\end{restatable}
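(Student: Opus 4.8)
The plan is to recast {\twostageP} as a mass-routing problem, guess a few parameters of an optimal solution, and then attack the residual problem with a \textsc{Knapsack Cover}-type relaxation.

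\textbf{Reformulation.} Trying, one by one, the polynomially many actions available in $s_0$, we may assume the action in $s_0$ is fixed, so that the probability mass $m_i := \prob{s^{\pi}_1 = s_i}$ reaching each state $s_i \in S_1$ is the same for every policy $\pi$, with $\sum_{s_i \in S_1} m_i = 1$. A policy is then exactly a choice of one action per state in $S_1$, inducing terminal probabilities $p^{\pi}(t) := \sum_{i} m_i\, p(t \mid s_i, \pi(s_i))$ for $t \in S_2$. With $\Delta(t) := r(t) - r'(t) \ge 0$ the potential loss at $t$ and budget $k = 1$, the adversary attacks the single terminal maximizing $\Delta(t)\, p^{\pi}(t)$, so that $R(\pi) = \sum_{t \in S_2} p^{\pi}(t)\,r(t)$, $L(\pi) = R(\pi) - \hat{R}(\pi) = \max_{t \in S_2} \Delta(t)\, p^{\pi}(t)$, and $\hat{R}(\pi) = R(\pi) - L(\pi)$. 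Since the claimed inequality must hold for every policy, it suffices to output one policy $\pi_1$ with $\hat{R}(\pi_1) \ge \frac{1}{1+\varepsilon}V$ where $V := \max_{\pi}\min\{\hat{R}(\pi), L(\pi)\}$; fix a policy $\pi^*$ attaining $V$, and note $R(\pi^*) = \hat{R}(\pi^*) + L(\pi^*) \ge 2V$ while $\Delta(t)\,p^{\pi^*}(t) \le L(\pi^*)$ for all $t \in S_2$.

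\textbf{Guessing.} By a geometric grid we guess, up to a factor $1 + \varepsilon'$, the values of $V$ and of $L(\pi^*)$ (writing $\lambda \approx V$, $\theta \approx L(\pi^*)$, so that $R(\pi^*) \ge \lambda + \theta$), together with the identity of the bottleneck terminal $t^* \in S_2$ realizing $L(\pi^*) = \Delta(t^*)\,p^{\pi^*}(t^*)$. There are only polynomially many relevant choices, so we run the procedure below for each and return the best policy found.

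\textbf{Relaxation and rounding.} With the guesses fixed, the target is a deterministic policy that routes the mass $(m_i)$ so as to (i) collect total nominal reward at least roughly $\lambda + \theta$, while (ii) keeping the loss $\Delta(t)\,p(t)$ at every terminal below roughly $\theta$; such a policy has $\hat{R} \gtrsim (\lambda + \theta) - \theta = \lambda \ge \frac{1}{1+\varepsilon}V$. Requirement (i) is a covering constraint on the reward and (ii) a family of per-terminal packing (capacity) constraints, and after isolating the single bottleneck terminal $t^*$ this combination is naturally modelled as a \textsc{Knapsack Cover} instance whose ``items'' are routing decisions, whose demand encodes (i), and whose capacities encode (ii). We write the LP of the fractional (randomized-policy) version, strengthen it with Knapsack-Cover inequalities for the reward-demand constraint, and round it to a deterministic policy by a \textsc{Generalized Assignment}-style (Shmoys--Tardos-type) rounding: the Knapsack-Cover inequalities keep the rounded nominal reward above a $(1+\varepsilon')^{-1}$ fraction of $\lambda+\theta$, while the rounding increases each terminal's load by at most one routing decision. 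The routing decisions that are ``large'' relative to a terminal's capacity $\theta$ --- precisely those a fractional solution would prefer to split --- are few (each already consumes a constant fraction of the reward budget, so $\pi^*$ uses only a bounded number), so we additionally guess which of them $\pi^*$ uses; after committing to them, every further rounding step only adds a ``small'' decision, so the load of each terminal stays below $(1+\varepsilon')\theta$. Tuning $\varepsilon'$ as a function of $\varepsilon$ then yields $\hat{R}(\pi_1) \ge \frac{1}{1+\varepsilon}V \ge \frac{1}{1+\varepsilon}\min\{\hat{R}(\pi), L(\pi)\}$ for all $\pi$.

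\textbf{Main obstacle.} The heart of the difficulty is the opposing pull between the covering requirement (enough total reward) and the packing requirements (small loss at \emph{each} terminal): a naive LP relaxation has unbounded integrality gap, since a state $s_i$ with large mass whose contribution a fractional solution spreads over several high-$\Delta$ terminals cannot be spread by a deterministic policy. The Knapsack-Cover inequalities are exactly the tool for this, but they must be reconciled with the assignment-style rounding so that the lone over-loaded terminal produced in each rounding step does not inflate the loss past $(1+\varepsilon)\theta$ --- which is why the bounded family of large routing decisions must be peeled off by guessing, and why this subroutine is effective precisely when $L(\pi)$ is a significant fraction of $R(\pi)$ (so that $\theta$ is within a constant of the target $\lambda$ and an additive slack of $\varepsilon'\theta$ can be afforded); the complementary case, where the loss is small compared with the nominal reward, is covered by the separate \textsc{Generalized Assignment}-based subroutine.
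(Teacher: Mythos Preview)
Your proposal has a genuine gap in the rounding step, and it misses the paper's central simplification.

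The flaw is the claim that the ``large'' routing decisions---those whose contribution to some terminal's loss is a significant fraction of~$\theta$---are few because ``each already consumes a constant fraction of the reward budget.'' This is false. Take $n$ states in $S_1$ each carrying mass $1/n$ and $n$ terminals with $r(t_j)=1$, $r'(t_j)=0$; if $\pi^*$ routes $s_i$ deterministically to $t_i$ then $L(\pi^*)=\theta=1/n$, every one of the $n$ decisions used by $\pi^*$ contributes exactly $\theta$ to its terminal's loss (so is ``large'' by any reasonable threshold), yet each contributes only a $1/n$ fraction of the total reward. You cannot guess all $n$ of them. Without that guessing step, Shmoys--Tardos rounding only guarantees loads up to $2\theta$, not $(1+\varepsilon')\theta$, and the resulting bound $R(\pi_1)-2\theta$ need not be close to~$\lambda$. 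A related problem: even your intended bound $\lambda-\varepsilon'\theta$ can be negative, since $\theta\approx L(\pi^*)$ may be arbitrarily larger than $\lambda\approx V$ whenever $\hat R(\pi^*)<L(\pi^*)$.

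The paper sidesteps the per-terminal packing constraints entirely. It guesses $L\approx L(\pi)$ and the worst-case terminal~$\hat t$, then solves the \emph{single-constraint} Knapsack Cover IP that maximizes the nominal reward collected at terminals \emph{other than}~$\hat t$ subject to the nominal reward collected \emph{at}~$\hat t$ being at least~$L$. The key observation is that any feasible solution $x$ with objective value~$W$ already gives $\hat R(\pi_x)\ge\min\{W,L\}$: if the adversary attacks~$\hat t$ you keep~$W$, and if it attacks any other terminal $t'$, the surviving reward still includes all of the mass at~$\hat t$, which is at least~$L$. So one covering constraint on a single guessed terminal replaces all your packing constraints, the standard FPTAS for Knapsack Cover applies directly, and no assignment-style rounding against multiple capacities is needed.
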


\begin{restatable}{theorem}{thmAlgoGap}\label{thm:algo-gap}
	For any $\varepsilon > 0$, there is an algorithm that, given an instance of \textsc{2-Stage Robust MDP Policy} computes in polynomial time a policy $\pi_2$ with $\hat{R}(\pi_2) \geq  \frac{1}{2}R(\pi) - 2\cdot(1+\varepsilon) L(\pi)$ for all policies $\pi$.
\end{restatable}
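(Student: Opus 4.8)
The plan is to compute, for each of polynomially many guessed values of a loss threshold $\lambda$, an integral policy $\pi_\lambda$ by rounding a \textsc{Generalized Assignment}-type relaxation, and to output the candidate $\pi_\lambda$ (over all guesses) with the largest worst-case reward. First I would reduce to the case $A(s_0)=\{a_0\}$ by enumerating the polynomially many actions available at $s_0$, so that we may fix the distribution $q_i:=p(s_i\mid s_0,a_0)$ over $S_1=\{s_1,\dots,s_m\}$. Writing $d(t):=r(t)-r'(t)\ge 0$ for the reward drop at a terminal $t\in S_2$ and $v(s_i,a):=\sum_t p(t\mid s_i,a)\,r(t)$, a policy $\pi$ reaches $t$ with probability $p^\pi(t)=\sum_i q_i\,p(t\mid s_i,\pi(s_i))$, so that $R(\pi)=\sum_i q_i\,v(s_i,\pi(s_i))$ and $L(\pi)=\max_{t\in S_2}d(t)\,p^\pi(t)$.

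For a guessed threshold $\lambda$ I would set up the relaxation with the states $s_i\in S_1$ as items, each of which must be (fractionally) assigned one action $a\in A(s_i)$; assigning $a$ to $s_i$ gives profit $q_i\,v(s_i,a)$ and places load $q_i\,p(t\mid s_i,a)\,d(t)$ on each bin $t\in S_2$, which has capacity $\lambda$; the objective is to maximise total profit. Crucially, I would first discard every pair $(s_i,a)$ with $q_i\,p(t\mid s_i,a)\,d(t)>\lambda$ for some $t$, i.e. every action that by itself would violate the loss bound. The point of the restriction is that any deterministic policy $\pi$ with $L(\pi)\le\lambda$ uses only surviving pairs, since $q_i\,p(t\mid s_i,\pi(s_i))\,d(t)\le p^\pi(t)\,d(t)\le L(\pi)\le\lambda$ for every $t$; hence such a $\pi$ is a feasible integral solution and the optimal fractional value $\mathrm{LP}(\lambda)$ is at least $R(\pi)$ whenever $\lambda\ge L(\pi)$.

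Next I would round the optimal fractional solution using \textsc{Generalized Assignment} rounding techniques in the spirit of Shmoys and Tardos (and Lenstra, Shmoys and Tardos for the load bound): states assigned integrally keep their action, while the few fractionally assigned states are rounded along a bipartite structure linking them to the terminals whose capacity constraints are tight. Since after the restriction every item places load at most $\lambda$ on every bin, each bin's load overshoots its capacity by at most one item's load, so $L(\pi_\lambda)\le 2\lambda$; the rounding recovers a constant fraction ---- a factor $\tfrac12$ in the worst case ---- of the fractional profit, so $R(\pi_\lambda)\ge\tfrac12\,\mathrm{LP}(\lambda)\ge\tfrac12 R(\pi)$ for $\lambda\ge L(\pi)$. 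Finally, letting $\lambda$ range over a geometric grid $\{(1+\varepsilon)^j\}$ together with $\lambda=0$, covering the (polynomially-bounded-in-magnitude) range of possible loss values, we can pick for any comparison policy $\pi$ a grid point with $L(\pi)\le\lambda\le(1+\varepsilon)L(\pi)$; then $\hat R(\pi_\lambda)=R(\pi_\lambda)-L(\pi_\lambda)\ge\tfrac12 R(\pi)-2(1+\varepsilon)L(\pi)$. Since the algorithm returns the candidate maximising $\hat R$, this inequality carries over to the returned policy $\pi_2$ and all $\pi$, as required.

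The step I expect to be the main obstacle is the rounding: because an action $a\in A(s_i)$ can spread probability mass, and hence loss, over several terminals at once, the relaxation is not literally a textbook \textsc{Generalized Assignment} instance ---- an item--option pair loads many bins simultaneously rather than a single one ---- so the classical bipartite rounding argument does not apply verbatim. Making it go through in this multi-bin setting while keeping the loss blow-up at $2$ and the profit loss at $\tfrac12$ is the technical heart of the proof. A natural route is to charge each surviving action to the terminal carrying its largest load, round the resulting genuine \textsc{Generalized Assignment} instance, and separately control the residual loads that each action places on its remaining terminals (e.g.\ by reserving part of the capacity for them); bounding these residuals within the budget is exactly where the factor $\tfrac12$ is paid and where the argument requires care.
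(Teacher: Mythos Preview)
Your overall architecture matches the paper: fix $a_0$, set up a GAP-type relaxation parameterized by a loss threshold, round via Shmoys--Tardos, and enumerate the threshold over a geometric grid. The gap is exactly where you put your finger: your relaxation assigns an \emph{action} to each $s_i$, which loads all terminals simultaneously, so you are really facing a vector-bin-packing / multi-dimensional GAP. Your proposed fix (``charge each action to its heaviest terminal and reserve capacity for the residuals'') does not go through as stated: the residual loads from many states can concentrate on a single terminal that none of them is charged to, and there is no standard rounding that guarantees a constant profit fraction \emph{and} a constant blow-up on every coordinate of a multi-dimensional load. Also, your accounting places the $\tfrac12$ in the rounding step; Shmoys--Tardos rounding actually preserves the full LP profit and only doubles capacities, so the $\tfrac12$ must come from elsewhere.

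The paper's resolution is to change what the relaxation assigns. Instead of assigning an action to each $s\in S_1$, assign $s$ to a single terminal $t\in S_2$ \emph{or} to an artificial option $t^*$. Assigning $s$ to $t$ yields profit $v_{st}:=\max_{a}v^a_{st}$ and places load $v_{st}$ on bin $t$ only (and nowhere else); assigning $s$ to $t^*$ yields profit $v_{st^*}:=\max_{a}\bigl(\sum_{t'}v^a_{st'}-\max_{t'}v^a_{st'}\bigr)$ and places \emph{no} load on any bin. This is now a genuine single-bin GAP, so Shmoys--Tardos applies verbatim: full profit, capacities doubled. The $\tfrac12$ arises in the upper-bound lemma: for any policy $\pi$ with $L(\pi)\le L$, the assignment ``send $s$ to $t_s:=\arg\max_t v^{\pi(s)}_{st}$ if $v_{st_s}\ge v_{st^*}$, else to $t^*$'' is feasible and its objective is at least $\sum_s\max\{v^{\pi(s)}_{st_s},v_{st^*}\}\ge\tfrac12\sum_s(v^{\pi(s)}_{st_s}+v_{st^*})\ge\tfrac12 R(\pi)$. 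Conversely, from an integer solution $y$ with value $W$ (feasible for capacity $L$) one builds $\pi_y$ by picking, for $s$ assigned to $t$, an action maximising $v^a_{st}$, and for $s$ assigned to $t^*$, an action maximising $\sum_{t'}v^a_{st'}-\max_{t'}v^a_{st'}$; then for any $\hat t$ the reward lost at $\hat t$ from $t^*$-states is already subtracted in $v_{st^*}$, and the reward lost from real-terminal states is at most $\sum_s v_{s\hat t}y_{s\hat t}\le L$, so $\hat R(\pi_y)\ge W-L$. After Shmoys--Tardos (capacity $2L$) and the grid search this gives $\hat R(\pi_2)\ge\tfrac12 R(\pi)-2(1+\varepsilon)L(\pi)$. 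The artificial $t^*$ option---absorbing the ``residual'' loads into the profit rather than into the capacity---is the idea your sketch is missing.
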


Our approximation algorithm simply runs both the algorithm from \cref{thm:algo-knapsack} and from \cref{thm:algo-gap} and returns whichever of the two policies $\pi_1$ and $\pi_2$ exhibits a higher worst-case reward.
\cref{thm:approximation} then directly follows by the following lemma, showing that at least one of the policies obtains approximately a fifth of the optimal worst-case reward.

\begin{lemma}
	Let $\varepsilon > 0$ and set $\varepsilon_1 := \frac{\varepsilon}{5}$ and $\varepsilon_2 := \frac{\varepsilon}{10 + 2\varepsilon}$.
	Let $\pi_1$ and $\pi_2$ be the policies constructed by the algorithms in \cref{thm:algo-knapsack,thm:algo-gap}, respectively, using $\varepsilon_1$ and $\varepsilon_2$, respectively, for a given instance of \textsc{2-Stage Robust MDP}.
	Then $\max \{\hat{R}(\pi_1), \hat{R}(\pi_2)\} \geq \frac{1}{5 + \varepsilon} \hat{R}(\pi)$ for all policies $\pi$.
\end{lemma}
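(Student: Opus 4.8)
The plan is to fix an arbitrary policy $\pi$, abbreviate $\hat{R}:=\hat{R}(\pi)$ and $L:=L(\pi)$, and recall that $R(\pi)=\hat{R}+L$, that $L\ge 0$ (the nominal realization $\hat{r}=r$ is always feasible in $\hat{\mathcal{Q}}$), and that $\hat{R}\ge 0$ (rewards being nonnegative). I would then prove the bound by a single case distinction on the size of the loss $L$ relative to $\hat{R}$, with the breakpoint at $L=\tfrac15\hat{R}$: for large loss the knapsack-cover subroutine of \cref{thm:algo-knapsack} already suffices, and for small loss the generalized-assignment subroutine of \cref{thm:algo-gap} does. The constants $\varepsilon_1=\varepsilon/5$ and $\varepsilon_2=\varepsilon/(10+2\varepsilon)$ are chosen precisely so that both cases return exactly the target value $\tfrac{1}{5+\varepsilon}\hat{R}$ at the breakpoint.

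First suppose $L\ge\tfrac15\hat{R}$. Since $\hat{R}\ge 0$, this gives $\min\{\hat{R},L\}\ge\tfrac15\hat{R}$, so \cref{thm:algo-knapsack} (invoked with $\varepsilon_1$) yields
\[
\hat{R}(\pi_1)\ \ge\ \frac{1}{1+\varepsilon_1}\min\{\hat{R},L\}\ \ge\ \frac{1}{1+\varepsilon/5}\cdot\frac{\hat{R}}{5}\ =\ \frac{5}{5+\varepsilon}\cdot\frac{\hat{R}}{5}\ =\ \frac{\hat{R}}{5+\varepsilon},
\]
so in this case $\pi_1$ already satisfies the claimed inequality.

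Now suppose $L\le\tfrac15\hat{R}$. Substituting $R(\pi)=\hat{R}+L$ into the guarantee of \cref{thm:algo-gap} (invoked with $\varepsilon_2$) gives
\[
\hat{R}(\pi_2)\ \ge\ \tfrac12(\hat{R}+L)-2(1+\varepsilon_2)L\ =\ \tfrac12\hat{R}-\bigl(\tfrac32+2\varepsilon_2\bigr)L\ \ge\ \tfrac12\hat{R}-\bigl(\tfrac32+2\varepsilon_2\bigr)\tfrac15\hat{R},
\]
where the last inequality uses $0\le L\le\tfrac15\hat{R}$ together with $\tfrac32+2\varepsilon_2>0$. It then remains to verify the elementary identity $\tfrac12-\tfrac15\bigl(\tfrac32+2\varepsilon_2\bigr)=\tfrac{1}{5+\varepsilon}$: for $\varepsilon_2=\varepsilon/(10+2\varepsilon)$ one has $\tfrac{2}{5}\varepsilon_2=\tfrac{\varepsilon}{5(5+\varepsilon)}$, so the right-hand side above equals $\bigl(\tfrac15-\tfrac{\varepsilon}{5(5+\varepsilon)}\bigr)\hat{R}=\tfrac15\cdot\tfrac{5}{5+\varepsilon}\,\hat{R}=\tfrac{\hat{R}}{5+\varepsilon}$, so $\pi_2$ satisfies the claim in this case.

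Combining the two cases, $\max\{\hat{R}(\pi_1),\hat{R}(\pi_2)\}\ge\tfrac{1}{5+\varepsilon}\hat{R}(\pi)$ for every policy $\pi$, as required. I do not expect a genuine obstacle here: the only delicate point is the bookkeeping that makes the two subroutine guarantees dovetail exactly at $L=\tfrac15\hat{R}$ after the substitution $R(\pi)=\hat{R}+L$, and since the values of $\varepsilon_1$ and $\varepsilon_2$ were reverse-engineered from precisely this requirement, the work reduces to verifying the two short computations above.
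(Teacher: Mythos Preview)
Your proof is correct and follows essentially the same approach as the paper: a case distinction at the threshold $L(\pi)=\tfrac15\hat{R}(\pi)$, invoking \cref{thm:algo-knapsack} in the large-loss case and \cref{thm:algo-gap} in the small-loss case, with the same substitution $R(\pi)=\hat{R}(\pi)+L(\pi)$ and the same final arithmetic verifying that $\varepsilon_1$ and $\varepsilon_2$ make both bounds equal $\tfrac{1}{5+\varepsilon}\hat{R}(\pi)$. The paper's write-up is slightly terser (it leaves the identity $\tfrac{1-2\varepsilon_2}{5}=\tfrac{1}{5+\varepsilon}$ implicit), but the argument is the same.
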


\begin{proof}
	Let $\pi$ be any policy for the given instance of \textsc{2-Stage Robust MDP}.
	We distinguish two cases.
	First, if $L(\pi) \geq \frac{1}{5} \hat{R}(\pi)$, then \cref{thm:algo-knapsack} yields 
	\begin{align*}
		\hat{R}(\pi_1) \geq \frac{1}{1+\varepsilon_1} \min \{\hat{R}(\pi), L(\pi)\} 
		\geq \frac{1}{1+\varepsilon_1} \cdot \frac{1}{5} \cdot \hat{R}(\pi) = \frac{1}{5 + \varepsilon} \cdot \hat{R}(\pi).
	\end{align*}
	Second, if $L(\pi) < \frac{1}{5} \hat{R}(\pi)$, then 
	\begin{align*}
		\hat{R}(\pi_2) 
		& \; \geq \; \frac{1}{2} R(\pi) - 2(1 + \varepsilon_2) L(\pi)
		\; = \; \frac{1}{2} (\hat{R}(\pi) + L(\pi)) - 2(1 + \varepsilon_2) L(\pi) 
		\\
	  & \; = \; \frac{1}{2} \hat{R}(\pi)  - \left(\frac{3}{2} + 2\varepsilon_2\right) L(\pi) 
	  \; > \; \frac{1 - 2 \varepsilon_2}{5}\hat{R}(\pi)
	  \; = \; \frac{1}{5 + \varepsilon}\hat{R}(\pi)
	 \end{align*} where the first inequality follows from \cref{thm:algo-gap} and the first identity follows from $\hat{R}(\pi) = R(\pi) - L(\pi)$.
\end{proof}

\paragraph{Assumptions simplifying the presentation}
Before we describe our subroutines, we introduce some assumptions that are without loss of generality but help to simplify the presentation of the subroutines.
\begin{enumerate}
    \item We assume that $A(s_0) = \{s_0\}$, i.e., there is only a single action $s_0$ available at the initial state.
    This is without loss of generality as we can simply run the algorithm separately for each fixed choice of action at $s_0$ and then return the best policy computed among these separate runs.
    \item We assume that $r'(t) = 0$ for all $t \in S_2$, i.e., the alternative reward at any terminal state is $0$. 
    This is without loss of generality as we can apply the following modification for any terminal state $t \in S_2$ with $r'(t) > 0$: Let $h := \left\lceil\frac{r'(t)}{r(t) - r'(t)}\right\rceil$ and replace $t$ by $h+1$ states $t_0, \dots, t_h$ with $r(t_0) = (r(t) - r'(t))(h+1)$, $r(t_i) = \frac{h+1}{h} r'(t)$, and $r'(t_0) = r'(t_i) = 0$ for $i \in \{1, \dots, h\}$; set furthermore $p(t_i|s,a) = \frac{1}{h+1} p(t|s,a)$ for all $s \in S_1$ and $a \in A(s)$.
    Note that this construction preserves the value of the nominal reward of any policy as $\sum_{i=0}^{h} p(t_i|s,a) r(t_i) = p(t|s,a) r(t)$ for any $s \in S_1$ and $a \in A(s)$.
    It also preserves the value of the worst-case reward for any policy, as $\sum_{i=1}^{h} p(t_i|s,a) r(t_i) = p(t|s,a) r'(t)$ for any $s \in S_1$ and $a \in A(s)$, i.e., deviation of the reward at $t_0$ in the modified instance has the same effect as deviation of the reward at $t$ in the original instance, and no deviation at $t_i$ for $i \in \{1, \dots, h\}$ in the modified instance has a worse effect on the reward, because $p(t_0|s,a)r(t_0) \geq p(t_i|s,a)r(t_i)$ for all $i \in \{1, \dots, h\}$, $s \in S_1$ and $a \in A(s)$. 
\end{enumerate}

\subsection{Algorithm~1: \textsc{Knapsack Cover}}
\label{sec:approximation-knapsack}

Our first algorithm is based on a \textsc{Knapsack Cover} relaxation of \textsc{2-Stage Robust MDP}.
The underlying idea is as follows: Assume we had access to a value $L$ which is close to the loss $L(\pi^*)$ of an optimal policy $\pi^*$, as well as the terminal state $\hat{t}$ affected by the worst-case uncertainty realization under $\pi^*$ (indeed, we will show below that such values can be obtained by enumerating over a set of polynomial size).
Using~$L$ and $\hat{t}$ as parameters, we formulate an integer program (IP) whose goal is to find a policy maximizing the nominal reward obtained at states other than $\hat{t}$ while ensuring that the nominal reward at $\hat{t}$ is at least~$L$. Using the FPTAS for \textsc{Knapsack Cover}, we obtain a near optimal solution, of which we show that it induces a policy $\pi_1$ with the property that $\hat{R}(\pi_1) \geq \frac{1}{1 + \varepsilon} \min \{\hat{R}(\pi^*), L(\pi^*)\}$, where $\varepsilon$ is the precision of the FPTAS for \textsc{Knapsack Cover} and our ``guess'' for $L$.

To introduce the IP, we define $$v^a_{st} := p(s | s_0, a_0) \cdot p(t|s,a)\cdot r(t)$$ for $s \in S_1$, $t \in S_2$, and $a \in A(s)$.
For any $L > 0$ and $\hat{t} \in S_2$, consider the following IP:
\begin{align*}
	\mathrm{UB_1}(L, \hat{t}) \qquad &\mathrm{max} \quad \textstyle \sum_{s \in S_1} \sum_{a \in A(s)}\sum_{t \in S_2 \setminus \{\hat{t}\}} v^{a}_{st} x_{sa} \\
	&\mathrm{s.t.} \quad 
	\begin{array}[t]{r l l l l r}
		\sum_{s \in s_1} \sum_{a\in A(s)} v^{a}_{s\hat{t}} \, x_{sa} & \geq L &{}&  &{}&\\
		\sum_{a \in A(s)} x_{sa}&=1 &{}&   \forall\, s \in S_1 &{}&\\
		x_{sa}&\in \{0,1\}  &{}& \forall\, s \in S_1, ~ a \in A(s)&{}&
	\end{array}
\end{align*}

We first observe that $\mathrm{UB_1}$ with the right parameters $L$ and $\hat{t}$ indeed yields an upper bound on the worst-case reward of a policy.

\begin{lemma}\label{lem:ub-knapsack}
	Let $\pi$ be a policy. For any $L \leq L(\pi)$ there is a $\hat{t} \in S_2$ such that the optimal value of $\mathrm{UB_1}(L, \hat{t})$ is at least $\hat{R}(\pi)$.
\end{lemma}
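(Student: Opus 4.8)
The plan is to take the policy $\pi$ and let $\hat t \in S_2$ be a terminal state at which the reward actually drops in a worst-case scenario for $\pi$; more precisely, I would pick $\hat t$ to be the terminal state whose nominal contribution $p^\pi(\hat t) r(\hat t)$ to the reward of $\pi$ is largest, since dropping that single reward is what realizes $L(\pi)$ in the two-stage, budget-$1$ setting. I would then define, from $\pi$, the natural $\{0,1\}$-vector $x$ by setting $x_{sa} = 1$ exactly when $\pi(s) = a$ for $s \in S_1$. This $x$ trivially satisfies the assignment constraints $\sum_{a \in A(s)} x_{sa} = 1$ and integrality, so it is a candidate solution for $\mathrm{UB_1}(L,\hat t)$ provided it also meets the covering constraint.

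The key identities to check are the following. First, for any terminal state $t \in S_2$, the nominal probability of ending at $t$ under $\pi$ factors as $p^\pi(t) = \sum_{s \in S_1} p(s|s_0,a_0)\, p(t|s,\pi(s))$, using the assumption $A(s_0) = \{a_0\}$; hence $p^\pi(t)\, r(t) = \sum_{s \in S_1}\sum_{a \in A(s)} v^a_{st}\, x_{sa}$. Second, because the uncertainty budget is $k = 1$ and (by the simplifying assumption) $r'(t) = 0$ for all $t$, the worst-case reward is $\hat R(\pi) = R(\pi) - \max_{t \in S_2} p^\pi(t) r(t) = R(\pi) - p^\pi(\hat t) r(\hat t)$ by our choice of $\hat t$, so that $L(\pi) = p^\pi(\hat t) r(\hat t)$. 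Now the covering constraint reads $\sum_{s \in S_1}\sum_{a \in A(s)} v^a_{s\hat t}\, x_{sa} = p^\pi(\hat t) r(\hat t) = L(\pi) \ge L$, which holds by the hypothesis $L \le L(\pi)$; so $x$ is feasible for $\mathrm{UB_1}(L,\hat t)$. Finally, its objective value is $\sum_{t \in S_2 \setminus \{\hat t\}} p^\pi(t) r(t) = R(\pi) - p^\pi(\hat t) r(\hat t) = \hat R(\pi)$, so the optimal value of the IP is at least $\hat R(\pi)$, as claimed.

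I don't anticipate a serious obstacle here — the argument is essentially a bookkeeping translation between policies and $0/1$ assignment vectors — but the one point requiring a little care is the identity $\hat R(\pi) = R(\pi) - \max_{t} p^\pi(t) r(t)$: this uses both that the budget is exactly $1$ (only one terminal reward may deviate) and the normalization $r'(t) = 0$, so that the adversary's best move is precisely to zero out the single most valuable terminal contribution, which is by construction the one at $\hat t$. Once that is in place, feasibility and the objective bound follow immediately from the factorization of $p^\pi(t)$ through $S_1$ and the definition of $v^a_{st}$.
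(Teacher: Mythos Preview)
Your proposal is correct and follows essentially the same approach as the paper: choose $\hat t$ to be the terminal state hit by the worst-case deviation for $\pi$, encode $\pi$ as the $0/1$ vector $x$, and verify feasibility via $\sum_{s,a} v^a_{s\hat t}x_{sa}=L(\pi)\ge L$ and objective value $\hat R(\pi)$. The only cosmetic difference is that you describe $\hat t$ explicitly as $\argmax_{t} p^\pi(t)r(t)$ whereas the paper just says ``the state with $\hat r(\hat t)=0$ in a worst-case scenario''; under the standing assumptions $k=1$ and $r'\equiv 0$ these coincide.
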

\begin{proof}
	 Let $\hat{r} \in \hat{\mathcal{Q}}$ be a worst-case scenario for $\pi$ and let $\hat{t} \in S_{2}$ be the state with $\hat{r}(\hat{t}) = 0 < r(\hat{t})$.
	 We define a solution $x$ to $\mathrm{UB_1}(L, \hat{t})$ by setting
	 $x_{s\pi(s)}= 1$ and $x_{sa} = 0$ for all $s \in S_1$ and $a \in A(s) \setminus \{\pi(s)\}$. 
	 Note that $x$ is feasible because $\sum_{s \in S_1}\sum_{a\in A(s)} v^{a}_{s\hat{t}}\, x_{sa} = \sum_{s \in S_1} p(s | s_0, a_0) \cdot p(\hat{t} |s,\pi(s)) \cdot r(\hat{t}) = L(\pi) \geq L$.
	 Moreover, 
	 \begin{align*}
	 		\sum_{s \in S_1} \sum_{a \in A(s)}\sum_{t \in S_2 \setminus \{\hat{t}\}} v^{a}_{st} x_{sa} = \sum_{t \in S_2 \setminus \{\hat{t}\}} \sum_{s \in S_1} p(s | s_0, a_0) \cdot p(t|s,\pi(s)) \cdot r(t) = \hat{R}(\pi)
	 \end{align*}
	completing the proof of the lemma.
\end{proof}

Conversely, every solution to $x$ to $\mathrm{UB_1}(L, \hat{t})$ also yields a policy $\pi_x$ be defining $\pi_x(s) = a$ for the unique action $a \in A(s)$ with $x_{sa} = 1$.
The following lemma gives a guarantee on the worst-case reward achieved by~$\pi_x$.

\begin{lemma}\label{lem:lb-knapsack}
	Let $L > 0$ and $\hat{t} \in S_2$, and let $x$ be a feasible solution to $\mathrm{UB_1}(L, \hat{t})$ with objective function value~$V$. Then $\hat{R}(\pi_x) \geq \min \{V, L\}$.
\end{lemma}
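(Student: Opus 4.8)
The plan is to bound the worst-case reward $\hat{R}(\pi_x)$ from below directly by analyzing the effect of an arbitrary worst-case deviation under policy $\pi_x$. Recall that in the two-stage setting with $k=1$ and the normalization $r'(t) = 0$ for all terminal states, a worst-case scenario for $\pi_x$ picks the single terminal state $t^* \in S_2$ maximizing the nominal reward mass $p^{\pi_x}(t^*)\, r(t^*)$ reaching it and drops that reward to zero; hence $\hat{R}(\pi_x) = R(\pi_x) - \max_{t \in S_2} \sum_{s \in S_1} p(s|s_0,a_0)\, p(t|s,\pi_x(s))\, r(t)$. In the notation of the IP, writing $V_t := \sum_{s \in S_1}\sum_{a \in A(s)} v^a_{st}\, x_{sa}$ for the reward mass reaching $t$, we have $R(\pi_x) = \sum_{t \in S_2} V_t$ and $\hat{R}(\pi_x) = \sum_{t \in S_2} V_t - \max_{t \in S_2} V_t$.

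Next I would split into two cases according to which terminal state attains the maximum. If the maximizing state is $\hat{t}$ itself, then $\hat{R}(\pi_x) = \sum_{t \neq \hat{t}} V_t = V$, the objective value of $x$, so in particular $\hat{R}(\pi_x) \geq \min\{V, L\}$. If instead the maximum is attained at some $t^* \neq \hat{t}$, then $\max_{t} V_t = V_{t^*} \leq V = \sum_{t \neq \hat{t}} V_t$, so $\hat{R}(\pi_x) = \sum_{t \in S_2} V_t - V_{t^*} = V_{\hat{t}} + \sum_{t \neq \hat{t}} V_t - V_{t^*} \geq V_{\hat{t}} \geq L$, where the last inequality is exactly the covering constraint of $\mathrm{UB_1}(L, \hat{t})$. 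In either case $\hat{R}(\pi_x) \geq \min\{V, L\}$, as claimed.

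I do not anticipate a serious obstacle here; the lemma is essentially a bookkeeping argument once the worst-case reward is rewritten in terms of the per-terminal reward masses $V_t$. The one point requiring a little care is the justification of the formula $\hat{R}(\pi_x) = R(\pi_x) - \max_t V_t$, which uses both the two-stage structure (so that reaching a terminal state does not itself depend on further actions) and the normalization $r'(t) = 0$ (established as without loss of generality in the preceding subsection), so that deviating at a terminal state wipes out precisely the reward mass $V_t$ and the adversary, constrained by $k=1$, simply picks the heaviest such state.
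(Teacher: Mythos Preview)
Your proposal is correct and follows essentially the same approach as the paper: both define the per-terminal reward mass (the paper calls it $Q_x(t)$, you call it $V_t$), observe that zeroing out terminal $t'$ leaves $\sum_{t \neq t'} V_t$, and then split into the cases $t' = \hat{t}$ (giving $V$) and $t' \neq \hat{t}$ (giving at least $V_{\hat{t}} \geq L$ by the covering constraint). Your version phrases the second case via the inequality $V_{t^*} \leq \sum_{t \neq \hat{t}} V_t$, which tacitly uses $V_t \geq 0$; this holds under the normalization $r'(t)=0 \leq r(t)$, and the paper's slightly more direct bound $\sum_{t \neq t^*} V_t \geq V_{\hat{t}}$ uses the same fact.
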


\begin{proof}
	For $t \in S_2$, let $$Q_x(t) := \sum_{s \in S_1} p(s | s_0, a_0) \cdot p(t|s,\pi_x(s)) \cdot r(t) = \sum_{s \in S_1} \sum_{a \in A(s)} v^a_{st} x_{sa}$$ be the nominal reward obtained at terminal state $t$ by policy $\pi_x$ in expectation.
	Note that for any $t' \in S_2$, the reward obtained by $\pi_x$ under the scenario $\hat{q}$ in which $\hat{q}_{t'} = 0$ is  
	\begin{align*}
		\sum_{t \in S_2 \setminus \{t'\}} Q_x(t) & = \sum_{s \in S_1} \sum_{a \in A(s)} \sum_{t \in S_2 \setminus \{t'\}} v^{a}_{st} x_{sa},
	\end{align*}
	which is equal to $V$ if $t' = \hat{t}$ and which is at least $Q_x(t') = \sum_{s \in S_1} \sum_{a \in A(s)} v^a_{s\hat{t}}\,x_{sa} \geq L$ otherwise. 
\end{proof}

Using \cref{lem:ub-knapsack,lem:lb-knapsack}, we are now ready to describe the algorithm that proves the guarantee stated earlier in \cref{thm:algo-knapsack}.

\thmAlgoKnapsack*

\begin{proof}
For $L > 0$ and $\hat{t} \in S_2$, let $V(L, \hat{t})$ denote the value of an optimal solution to $\mathrm{UB_1}(L, \hat{t})$.
Note that $\mathrm{UB_1}(L, \hat{t})$ is a \textsc{Knapsack Cover} problem and thus admits an FPTAS~\citep{pruhs2007approximation}.
In particular,  we can compute in polynomial time a feasible solution $x_{L,\hat{t}}$ to $\mathrm{UB_1}(L, \hat{t})$ whose objective value is at least $\frac{1}{1+\varepsilon}V(L, \hat{t})$.
Let $\pi_{L,\hat{t}} := \pi_{x_{L,\hat{t}}}$ be the corresponding policy.

To obtain the policy $\pi_1$ stated in the theorem, we run the following algorithm.
Let $r_{\min} := \min_{t \in S_2} r(t)$ and $r_{\max} := \max_{t \in S_2} r(t)$. Let $\ell := \lceil \log_{1+\varepsilon} \frac{|S_2| r_{\max}}{r_{\min}} \rceil$.
Let $L_i := (1+\varepsilon)^i \frac{r_{\min}}{|S_2|}$ for $i \in \mathbb{N}$.
For every $i \in \{0, \dots, \ell\}$ and every $\hat{t} \in S_2$, compute the policy $\pi_{L_i, \hat{t}}$ as described above.
Let $\pi_1 \in \argmax_{i \in \{0, \dots, \ell\}, \hat{t} \in S_{2}} \hat{R}(\pi_{L_i, \hat{t}})$, i.e., we return a policy $\pi_1$ with highest nominal reward among the once computed.

Note that this algorithm runs in polynomial time, as there are only $(\ell + 1)|S_2|$ combinations of $L$ and $\hat{t}$ to be checked, $\ell$ is polynomial in $\log |S_2|$ and the encoding size of $r_{\max}$ and $r_{\min}$, and for each fixed combination of $L$ and $\hat{t}$ the corresponding policy $\pi_{L_i, \hat{t}}$ can be computed in polynomial time.

Now consider an arbitrary policy $\pi$.
Note that $\frac{r_{\min}}{|S_2|} \leq L(\pi) \leq r_{\max}$ and hence there is $i' \in \{0, \dots, \ell\}$ such that $\frac{1}{1 + \varepsilon} L(\pi) \leq L_{i'} \leq L(\pi)$.
Hence, $V(L_{i'}, \hat{t}') \geq \hat{R}(\pi)$ by \cref{lem:ub-knapsack} for some $\hat{t}' \in S_2$.
We obtain
\begin{align*}
	\hat{R}(\pi_1) \geq \hat{R}(\pi_{L_{i'},\hat{t}'}) \geq \min \left\{ \frac{V(L_{i'},\hat{t}')}{1+\varepsilon}, L_{i'} \right\} \geq \frac{1}{1+\varepsilon} \min \{\hat{R}(\pi), L(\pi)\},
\end{align*}
where first inequality follows from the choice of $\pi_1$, the second inequality follows from \cref{lem:lb-knapsack}
and the construction of $\pi_{L_{i'},\hat{t}'}$ from a near-optimal solution to $\mathrm{UB_1}(L_{i'}, \hat{t}')$, and the third inequality follows from 
$V(L_{i'}, \hat{t}') \geq \hat{R}(\pi)$
and
$L_{i'} \geq \frac{1}{1 + \varepsilon} L(\pi)$.
\end{proof}

\subsection{Algorithm~2: \textsc{Generalized Assignment}}
\label{sec:approximation-gap}

Our second algorithm is based on the \textsc{Generalized Assignment} problem.
While the algorithm described in the preceding section yields a good approximation factor when $L(\pi^*)$ is close to $\hat{R}(\pi^*)$ for some optimal policy, its guarantee deteriorates when $L(\pi^*)$ is significantly smaller.
The second algorithm is thus aimed at the case where $L(\pi^*)$ is small compared to $\hat{R}(\pi^*)$.

Recalling the definition of $v^a_{st} := p(s | s_0, a_0) \cdot p(t|s,a)\cdot r(t)$ from the previous section, we define
\begin{align*}
    \textstyle v_{st} := \max\limits_{a \in A(s)} v^a_{st} ~\text{ and }~ v_{st^*} := \max\limits_{a \in A(s)} \sum\limits_{t \in S_2 \setminus \{t_s^a\}} v^a_{st} ~\text{ with }~ t_s^a \in \argmax\limits_{t \in S_2} v^a_{st},
\end{align*}
for $s \in S_1$ and $t \in S_2$, 
where $t^*$ is an artificial state used in the relaxation.
Given $L > 0$, consider the following linear program (LP):
\begin{align*}
	\mathrm{UB_2}(L) \qquad & \mathrm{max} \quad \sum_{s \in S_1} \sum_{t\in S_2 \cup \{t^*\}} v_{st}y_{st} \\ 
    & \mathrm{s.t.} \quad 
	\begin{array}[t]{rllllr}
		\sum\limits_{s \in S_1} v_{st}y_{st}&\leq L &{}&   \forall\; t \in S_2  &{}&  \\
		\sum_{t \in  S_2 \cup \{t^*\}} y_{st}&=1 &{}&  \forall\; s \in S_1 &{}&  \\
		y_{st}&= 0 &{}& \forall s \in S_1, t \in S_2 \text{ with } v_{st}>L  &{}&  \\
		y_{st}&\geq 0  &{}& \forall~ s \in S_1, t\in S_2\cup \{t^*\}  &{}& 
	\end{array}
\end{align*}

Intuitively, $\mathrm{UB_2}(L)$ relaxes {\rewardP} by considering for each $s \in S_1$ the following possibilities: Either we only consider the reward $v_{st}^a$ obtained at a single terminal state $t$ under some action $a$ taken at $s$, or we consider the expected reward $v_{st}^a$ obtained at all terminal states $t$ under some action $a$, but discarding the largest among these expected rewards.
While in the latter case, the relaxation already incorporates the possible loss due to deviating rewards, in the former case, we further bound the concentration of expected reward at any terminal state by $L$.
The following lemma shows that $\mathrm{UB_2}(L)$ indeed yields an upper bound on the nominal reward of policies with loss at most $L$.

\begin{lemma}\label{lem:ub-gap}
For any policy $\pi$ and any $L \geq L(\pi)$, the optimal value of $\mathrm{UB_2}(L)$ is at least $\frac{1}{2}R(\pi)$.
\end{lemma}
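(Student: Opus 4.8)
The plan is to prove \cref{lem:ub-gap} by exhibiting, for the given policy $\pi$, a feasible solution $y$ of $\mathrm{UB_2}(L)$ whose objective value is at least $\tfrac12 R(\pi)$. Write $a_s := \pi(s)$ for $s \in S_1$, and for each $s$ fix $t_s \in \argmax_{t \in S_2} v^{a_s}_{st}$, a terminal state at which $\pi$ concentrates the largest share of the expected reward routed through $s$; put $m_s := v^{a_s}_{s,t_s}$ and $R_s := \sum_{t \in S_2} v^{a_s}_{st}$, so that $R(\pi) = \sum_{s \in S_1} R_s$ and $0 \le m_s \le R_s$. The idea is to split the single unit of weight the relaxation assigns to $s$ between the state $t_s$, which will carry the ``peak'' contribution $m_s$, and the artificial state $t^*$, which via $v_{st^*} \ge \sum_{t \in S_2 \setminus \{t_s\}} v^{a_s}_{st} = R_s - m_s$ carries the remainder, balancing the split so that $s$ contributes about $\tfrac12 m_s$ through $t_s$ and about $\tfrac12(R_s - m_s)$ through $t^*$, i.e.\ $\tfrac12 R_s$ in total.

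Concretely I would first reduce to the case $v_{st} \le L$ for all $s \in S_1$ and $t \in S_2$: an action $a$ at a state $s$ with $\max_{t} v^a_{st} > L$ is used by no policy of loss at most $L$ (that action alone would push the loss above $L$), hence neither by $\pi$ nor by an optimal policy, so deleting all such actions leaves $R(\pi)$ and $L(\pi)$ unchanged, makes the constraint $y_{st} = 0$ for $v_{st} > L$ vacuous, and preserves $v_{s,t_s} \ge m_s$ and $v_{st^*} \ge R_s - m_s$ (both witnessed by $a_s$, which survives). On the resulting instance set $y_{s,t_s} := \tfrac{m_s}{2 v_{s,t_s}}$, $y_{s,t^*} := 1 - y_{s,t_s}$, and $y_{s,t} := 0$ for all other $t$ (taking $y_{s,t^*} := 1$ when $m_s = 0$). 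Since $m_s \le v_{s,t_s}$ we get $y_{s,t_s} \in [0,\tfrac12]$, so $y \ge 0$ and $\sum_{t \in S_2 \cup \{t^*\}} y_{st} = 1$ for every $s$.

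For the objective, the contribution of $s$ is $v_{s,t_s}\tfrac{m_s}{2 v_{s,t_s}} + v_{st^*}\bigl(1 - \tfrac{m_s}{2 v_{s,t_s}}\bigr) \ge \tfrac12 m_s + \tfrac12 v_{st^*} \ge \tfrac12 m_s + \tfrac12(R_s - m_s) = \tfrac12 R_s$, using $1 - \tfrac{m_s}{2 v_{s,t_s}} \ge \tfrac12$ and $v_{st^*} \ge R_s - m_s \ge 0$; summing yields objective value at least $\tfrac12 R(\pi)$. For the capacity constraints — the place where $L \ge L(\pi)$ is used — observe that since $k = 1$ and $r'(t) = 0$ for every terminal state $t$ (the simplifying assumptions of this section), the worst case for $\pi$ simply zeroes out the single most rewarding terminal, so $L(\pi) = \max_{t \in S_2} \sum_{s \in S_1} v^{a_s}_{st}$; hence for each $t \in S_2$ only the $s$ with $t_s = t$ contribute and $\sum_{s \in S_1} v_{st} y_{st} = \tfrac12 \sum_{s : t_s = t} m_s \le \tfrac12 \sum_{s \in S_1} v^{a_s}_{st} \le \tfrac12 L(\pi) \le L$.

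The step I expect to be the main obstacle is the preliminary reduction. The exclusion $y_{st} = 0$ for $v_{st} > L$ genuinely forbids routing $s$ to $t_s$ when $v_{s,t_s}$ — a maximum over \emph{all} actions at $s$, not just the one $\pi$ chooses — exceeds $L$, so the construction above really does require the overloaded actions to be gone; and since the deletion changes both the coefficients $v_{st}$ and the exclusion set $\{(s,t) : v_{st} > L\}$, the reduced and original relaxations are not literally nested, so one has to argue with some care that $\mathrm{UB_2}(L)$ for the original instance is still at least $\tfrac12 R(\pi)$. Everything after that point is the routine verification sketched above.
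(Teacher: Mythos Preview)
Your fractional solution is a close variant of the paper's: you split the unit at $s$ between $t_s$ and $t^*$ in the proportions $\bigl(\tfrac{m_s}{2v_{s,t_s}},\,1-\tfrac{m_s}{2v_{s,t_s}}\bigr)$, whereas the paper uses $\bigl(\tfrac{m_s}{v_{s,t_s}},\,1-\tfrac{m_s}{v_{s,t_s}}\bigr)$ together with a case distinction on whether $v_{s,t_s}\ge v_{st^*}$. The objective bound (via $\max\{m_s,v_{st^*}\}\ge\tfrac12(m_s+v_{st^*})\ge\tfrac12 R_s$) and the capacity check (via $\sum_{s:t_s=t} m_s\le\sum_s v^{a_s}_{st}\le L(\pi)$) are essentially identical in both.

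You are right to single out the exclusion constraint $y_{st}=0$ whenever $v_{st}>L$ as the crux, and your reduction does not resolve it. Deleting over-heavy actions changes the coefficients $v_{st}=\max_a v^a_{st}$ themselves, so a feasible solution for the reduced LP need not transfer to the original one---and the obstruction is not a technicality, because the lemma as written is actually false. Take $S_1=\{s\}$, $S_2=\{t_1,t_2\}$, $A(s)=\{a_1,a_2\}$ with $v^{a_1}_{st_1}=1$, $v^{a_2}_{st_1}=100$ and all other $v^a_{st}=0$. Then $v_{st_1}=100$, $v_{st_2}=v_{st^*}=0$; for the policy $\pi(s)=a_1$ one has $R(\pi)=L(\pi)=1$, yet with $L=1$ the exclusion forces $y_{st_1}=0$ and the optimum of $\mathrm{UB_2}(1)$ is $0<\tfrac12 R(\pi)$.

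The paper's own proof has the same gap: it simply asserts that its $y$ satisfies the exclusion constraint ``by construction'' and gives no justification, and the example above shows none can be given in general. What both arguments actually establish is the lemma for the LP in which the over-heavy actions have first been removed (equivalently, with each $v_{st}$ replaced by the post-reduction maximum), which is precisely your preprocessing. That modified LP is the one to hand to the Shmoys--Tardos rounding anyway, so your reduction is the right fix at the level of the overall algorithm; it just does not rescue \cref{lem:ub-gap} as stated.
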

\begin{proof}
Let $\pi$ be a policy. 
We define a fractional solution $y$ to $\mathrm{UB_2}(L)$ as follows.
For each $s \in S_1$, let $t_s := t^{\pi(s)}_s \in \argmax_{t \in S_2} v^{\pi(s)}_{st}$.
If $v_{st_s} \geq v_{st^*}$, then set 
$y_{st_s} := \frac{v^{\pi(s)}_{st_s}}{v_{st_s}}$, 
$y_{st^*} := 1-\frac{v^{\pi(s)}_{st_s}}{v_{st_s}}$, 
and $y_{st} := 0$ for $t \in S_2 \setminus \{t_s, t^*\}$.
Otherwise, $ y_{st^*}:=1$ and $y_{st}:=0$ for $t \neq t^*$. 	

Note that, by construction, $\sum_{t \in S_{2} \cup \{t^*\}} y_{st} = 1$ for all $s \in S_1$ and $y_{st} = 0$ for all $s \in S_1$, $t \in S_2$ with $v_{st} > 0$.
Because the largest nominal reward obtained in $\pi$ at any terminal state is no more than $L(\pi)$, we have
\begin{align*}
	\sum_{s\in S_1} v_{st}y_{st} \leq \sum_{s \in S_1: t = t_s} v^{\pi(s)}_{st} \leq \sum_{s \in S_1}  p(s | s_0, a_0) \cdot p(t|s, \pi(s))\cdot r(t) \leq L(\pi) \leq L
\end{align*}
for all $t \in S_2$.
Thus $y$ is a feasible solution to $\mathrm{UB_2}(L)$.

Moreover, note that by definition of $t_s = t^{\pi(s)}_s$ and $v_{st^*} = \max\limits_{a \in A(s)} \sum\limits_{t \in S_2 \setminus \{t^{a}_s\}} v^a_{st}$, we obtain
\begin{align*}
    v^{\pi(s)}_{st_s} + v_{st^*} & \;\geq\; v^{\pi(s)}_{st_s} + \sum_{t \in S_2 \setminus \{t^{\pi(s)}_s\}} v^{\pi(s)}_{st} = \sum_{t \in S_2} v^{\pi(s)}_{st} =  \sum_{t \in S_2} p(s | s_0, a_0) \cdot p(t|s, \pi(s))\cdot r(t)
\end{align*}
for all $s \in S_1$, from which we conclude that
\begin{align*} 
	\sum_{s \in S_1} \sum_{t \in S_2 \cup \{t^*\}} v_{st}y_{st} & \;\geq\; \sum_{s \in S_1} \max \{v^{\pi(s)}_{st_s}, v_{st^*}\}
 \;\geq\; \frac{1}{2} \sum_{s \in S_1} (v^{\pi(s)}_{st_s} + v_{st^*}) \\
 & \;\geq\; \frac{1}{2} \sum_{s \in S_1} \sum_{t \in S_2}  p(s | s_0, a_0) \cdot p(t|s, \pi(s))\cdot r(t) \;=\; \frac{1}{2}R(\pi),
\end{align*}
completing the proof of the lemma.
\end{proof}	

Conversely, any integer solution $y$ to $\mathrm{UB_2}(L)$ also induces a policy $\pi_y$ via the following construction:
For $s \in S_1$, let $t^y_s \in S_2 \cup \{t^*\}$ be the unique (possibly artificial) terminal state with $y_{st^y_s} = 1$.
Then set 
$$\textstyle \pi_{y}(s) \in \argmax\limits_{a \in A(s)} \sum\limits_{t \in S_2 \setminus \{t^a_s\}} v^a_{st} ~\text{ if } t^y_s = t^*, 
\text{ and }
\pi_{y}(s) \in \argmax\limits_{a \in A(s)} v^a_{st^y_s} ~\text{ otherwise.}$$
We give the following guarantee on worst-case reward achieved by $\pi_y$.

\begin{lemma}\label{lem:lb-gap}
	Let $L > 0$ and let $y$ be a feasible solution to 	$\mathrm{UB_2}(L)$ with objective function value $W$.
	Then $\hat{R}(\pi_y) \geq W - L$.
\end{lemma}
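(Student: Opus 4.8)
The plan is to account for the expected reward that $\pi_y$ collects at each terminal state and compare the total with the objective value $W$ of $y$. Under the simplifying assumptions of \cref{sec:approximation} (budget $1$, alternative rewards equal to $0$), the worst-case scenario for any policy zeroes out the reward at the single terminal state carrying the most expected nominal reward, so
\[
\hat R(\pi_y) \;=\; R(\pi_y) - \max_{t'\in S_2} Q(t'),
\]
where $Q(t') := \sum_{s\in S_1} v^{\pi_y(s)}_{st'}$ and $R(\pi_y) = \sum_{t'\in S_2} Q(t')$. I would first record this identity (it is the same reasoning used in the proof of \cref{lem:lb-knapsack}) and note that since $r(t)\ge 0$, all $v^a_{st}$ are nonnegative.

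Next I would evaluate $R(\pi_y)$ in terms of $y$. Let $t^y_s$ denote the unique, possibly artificial, state with $y_{st^y_s}=1$. By the definition of $\pi_y$: if $t^y_s\in S_2$ then the action $\pi_y(s)$ attains $v^{\pi_y(s)}_{st^y_s}=v_{st^y_s}$; if $t^y_s=t^*$ then it attains $\sum_{t\in S_2\setminus\{t^{\pi_y(s)}_s\}} v^{\pi_y(s)}_{st}=v_{st^*}$. Summing over $S_1$ and over all terminal states yields $R(\pi_y) = W + \Sigma$, where
\[
\Sigma \;:=\; \sum_{s\,:\,t^y_s\in S_2}\ \sum_{t\in S_2\setminus\{t^y_s\}} v^{\pi_y(s)}_{st} \;+\; \sum_{s\,:\,t^y_s=t^*} v^{\pi_y(s)}_{s\,t^{\pi_y(s)}_s} \;\ge\; 0
\]
is the part of the nominal reward of $\pi_y$ not charged to the objective $W$.

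The heart of the proof is the inequality $\max_{t'\in S_2} Q(t') \le L + \Sigma$. I would fix a maximizer $t'$ and partition $S_1$ into $A:=\{s:t^y_s=t'\}$, $B:=\{s:t^y_s\in S_2\setminus\{t'\}\}$, and $C:=\{s:t^y_s=t^*\}$. For $s\in A$ we have $v^{\pi_y(s)}_{st'}=v_{st'}$, and since $y$ is integral the covering constraint of $\mathrm{UB_2}(L)$ reads $\sum_{s\in A}v_{st'} = \sum_{s\in S_1} v_{st'}y_{st'} \le L$. For $s\in B$ the quantity $v^{\pi_y(s)}_{st'}$ occurs inside $\sum_{t\in S_2\setminus\{t^y_s\}} v^{\pi_y(s)}_{st}$, hence inside $\Sigma$, because $t'\ne t^y_s$. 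For $s\in C$ we have $v^{\pi_y(s)}_{st'}\le v^{\pi_y(s)}_{s\,t^{\pi_y(s)}_s}$, as $t^{\pi_y(s)}_s$ maximizes $v^{\pi_y(s)}_{s\,\cdot}$ over $S_2$, and the right-hand side is exactly the contribution of $s$ to $\Sigma$. Adding up, $Q(t') = \sum_{s\in A} v_{st'} + \sum_{s\in B\cup C} v^{\pi_y(s)}_{st'} \le L + \Sigma$. Substituting into $\hat R(\pi_y) = R(\pi_y) - \max_{t'}Q(t') = W + \Sigma - \max_{t'}Q(t')$ gives $\hat R(\pi_y) \ge W - L$.

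I expect the bookkeeping in the third paragraph to be the delicate step: one has to verify that the reward $\pi_y$ happens to accumulate at $t'$ coming from states that $y$ assigned elsewhere (groups $B$ and $C$) is, state by state, already covered by slack sitting in $\Sigma$, and in particular that discarding the single largest expected reward in the $t^*$ case is precisely what makes the $C$-terms cancel. A secondary point requiring care is that integrality of $y$ is exactly what collapses the fractional covering constraint $\sum_{s} v_{st'}y_{st'}\le L$ into the clean bound $\sum_{s\in A}v_{st'}\le L$ used for group $A$.
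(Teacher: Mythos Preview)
Your proof is correct and follows essentially the same approach as the paper: both partition $S_1$ according to whether $t^y_s$ is a real terminal or $t^*$, use the LP constraint $\sum_s v_{st'}y_{st'}\le L$ for the ``real'' group, and use the fact that discarding the largest $v^{\pi_y(s)}_{s\,\cdot}$ leaves exactly $v_{st^*}$ for the $t^*$ group. The only cosmetic difference is that you keep the full nominal reward $R(\pi_y)=W+\Sigma$ and then show $Q(t')\le L+\Sigma$, whereas the paper throws away the $B$-type slack up front and bounds the scenario reward directly; the two calculations are algebraic rearrangements of one another.
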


\begin{proof}
	Let $S_1' := \{s \in S_1 : t^y_s \neq t^*\}$ and $S_1'' := \{s \in S_1 : t^y_s = t^*\}$.
	Note that
	\begin{align*}
		R(\pi_y) & 
		\;=\; \sum_{s \in S_1} \sum_{t \in S_2} p(s | s_0, a_0) \cdot p(t|s,\pi_y(s)) \cdot r(t) 
		\;=\; \sum_{s \in S_1} \sum_{t \in S_2} v^{\pi(s)}_{st} \\
		& \;=\; \sum_{s \in S_1'} \sum_{t \in S_2} v^{\pi(s)}_{st} +
		\sum_{s \in S_1''} \sum_{t \in S_2} v^{\pi(s)}_{st}
		\;\geq\; \sum_{s \in S_1'} v^{\pi(s)}_{st^y_s} +
		\sum_{s \in S_1''} \sum_{t \in S_2} v^{\pi(s)}_{st}
		\\
		& \;=\; \sum_{s \in S_1} \sum_{t \in S_2} v_{st} y_{st}
		+ \sum_{s \in S_1''} \sum_{t \in S_2} v^{\pi(s)}_{st}
	\end{align*}
	where the final identity follows from $v_{st^y_s}^{\pi(s)} = v_{st^y_s}$ by choice of $\pi(s)$ when $t^y_s \neq t^*$.
	
	Let $\hat{t} \in S_2$.
	Then the reward obtained by $\pi_y$ in the scenario in which $\hat{r}_{\hat{t}} = 0$ is at least
	\begin{align*}
		\sum_{s \in S_1} \sum_{t \in S_2 \setminus \{\hat{t}\}} v_{st} y_{st}
		+ \sum_{s \in S_1''} \sum_{t \in S_2 \setminus \{\hat{t}\}} v^{\pi(s)}_{st} & \; \geq \; \left(\sum_{s \in S_1} \sum_{t \in S_2} v_{st} y_{st}\right) - L
		+ \sum_{s \in S_1''} v_{st^*}
		\\
		& \geq \; \left(\sum_{s \in S_1} \sum_{t \in S_2 \cup \{t^*\}} v_{st} y_{st}\right) - L = W - L,
	\end{align*}
	where the first inequality follows from $\sum_{s \in S_1} v_{s\hat{t}} \, y_{s\hat{t}} \leq L$ by feasibility of $y$ and from $$\sum_{t \in S_2 \setminus \{\hat{t}\}} v^{\pi(s)}_{st} \geq \sum_{t \in S_2} v^{\pi(s)}_{st} - \max_{t \in S_2} v^{\pi(s)}_{st} = v_{st^*}$$ by choice of $\pi(s)$ for $s \in S_1''$ and definition of $v_{st^*}$.
	We conclude that $\hat{R}(\pi_y) \geq W - L$.
\end{proof}

We are now ready to prove \cref{thm:algo-gap}, providing the second subroutine for our algorithm.

\thmAlgoGap*

\begin{proof}
	First note that $\mathrm{UB_2}(L)$ for fixed $L > 0$ is identical to the LP relaxation of \textsc{Generalized Assignment} used in the algorithm of \citet{shmoys1993approximation} for that problem.
	Given an a feasible (possibly fractional) solution $y$ to $\mathrm{UB_2}(L)$, this algorithm~\citep[...]{shmoys1993approximation} computes in polynomial time, an integer solution $y'$ to $\mathrm{UB_2}(2L)$ with 
	\begin{align*}
		\sum_{s \in S_1} \sum_{t \in S_2 \cup \{t^*\}} v_{st} y'_{st} \geq \sum_{s \in S_1} \sum_{t \in S_2 \cup \{t^*\}} v_{st} y'_{st}.
	\end{align*}
	That, $y'$ achieves the same objective function value as the $y$, but requires doubling the value of $L$ to be feasible.
	For any $L > 0$, let $y_L$ denote an optimal (fractional) solution to $\mathrm{UB_2}(L)$, let $y'_L$ denote the corresponding integer solution computed by the algorithm of \citet{shmoys1993approximation}, and let $\pi_L := \pi_{y'_L}$ denote the policy obtained from applying \cref{lem:lb-gap} to $y'$.
	
	To obtain the policy $\pi_2$ stated in the lemma, we run the following algorithm. 
	Let $r_{\min} := \min_{t \in S_2} r_t$ and $r_{\max} := \max_{t \in S_2} r_t$. 
	Let $\ell := \lceil \log_{1+\varepsilon} \frac{|S_2| r_{\max}}{r_{\min}} \rceil$.
	Let $L_i := (1+\varepsilon)^i \frac{r_{\min}}{|S_2|}$ for $i \in \mathbb{N}$.
	For every $i \in \{0, \dots, \ell\}$ and every $\hat{t} \in S_2$, compute the policy $\pi_{L_i}$ as described above.
	Let $\pi_2 \in \argmax_{i \in \{0, \dots, \ell\}} \hat{R}(\pi_{L_i})$, i.e., we return a policy $\pi_2$ with highest nominal reward among the once computed.
	
	Note that the algorithm runs in polynomial time, because there are only $\ell$ different values of $L_i$ for which policy has to be computed,  $\ell$ is polynomial in $\log |S_2|$ and the encoding size of $r_{\max}$ and $r_{\min}$, and for each fixed $L_i$, the corresponding policy $\pi_{L_i}$ can be computed in polynomial time.

	Now consider an arbitrary policy $\pi$.
	Note that $\frac{r_{\min}}{|S_2|} \leq L(\pi) \leq r_{\max}$ and hence there is $i' \in \{0, \dots, \ell\}$ such that $(1 + \varepsilon) L(\pi) \geq L_{i'} \geq L(\pi)$.
Hence, $W(L_{i'}) \geq \frac{1}{2}R(\pi)$ by \cref{lem:ub-gap}, and we obtain
\begin{align*}
	\hat{R}(\pi_2) \geq \hat{R}(\pi_{L_{i'}}) \geq W(L_{i'}) - 2L_{i'} \geq \frac{1}{2}R(\pi) - 2 \cdot (1 + \varepsilon) L(\pi)
\end{align*}
where the first inequality follows from the choice of $\pi_2$, the second inequality follows from \cref{lem:lb-gap}
and the construction of $\pi_{L_{i'}}$ from feasible integer solution to $\mathrm{UB_2}(2L_{i'})$ and the fact that 
$\sum_{s \in S_1} \sum_{t \in S_2 \cup \{t^*\}} v_{st} y'_{st} \geq W(L_{i'}, \hat{t}') \geq \hat{R}(\pi)$
and the last inequality follows from 
$L_{i'} \geq (1 + \varepsilon) L(\pi)$.
\end{proof}

\section{Complexity for LDST with uncertainty in the transition probabilities}
\label{sec:hardness}

We introduce the following notation to simplify the presentation of the two reductions in this section.

\paragraph{Notation}
In the reductions, we will only make use of actions $a \in A(s)$ for $s \in S$ of two types: (i) actions with randomized outcomes but where the transition probabilities are known and not subject to uncertainty, i.e., \mbox{$\hat{\mathcal{P}}_{sa} = \{p(\cdot|s,a)\}$}; (ii) actions with a deterministic but uncertain outcome, i.e., $\hat{p}(t|s,a) \in \{0, 1\}$ for all $t \in S$.
To simplify the description of the latter type of actions, we introduce the notation $z(s, a) \in S$ and $Z(s,a) \subseteq S$ to describe an action for which $\hat{\mathcal{P}}_{sa}$ contains the nominal distribution given by $p(z(s,a)|s,a) = 1$ and alternate distributions $\hat{p}(z|s,a) = 1$ for each $z \in Z(s,a)$.

Our constructions will further contain state-action pairs $s \in S_{\tau}$ and $a \in A(s)$ with $p(t|s,a) > 0$ for some $t \in S_{\theta}$ with $\theta > \tau + 1$.
This is with the tacit understanding that such an action leads from $s$ to $t$ via a sequence of intermediate states $s_{\tau+1}, \dots, s_{\theta-1}$ that we ommit to declutter the presentation.

\label{sec:transition-sigma-2-p}

Using the notation defined above, we show that {\transitionP} is $\Sigma_2^p$-hard.

\thmTransitionSigma*
	\begin{figure}[h] 
		\centering 
		\scalebox{0.9}{\begin{tikzpicture}[yscale=-1.6,xscale=1.5,thick,
			every transition/.style={draw=red,fill=red!20,minimum size=4.8mm},
			every place/.style={draw=blue,fill=blue!20, minimum size=6mm}]
			every transition2/.style={draw=yellow,fill=yellow!20,minimum size=4.8mm},
			\node[place] (s0) at (-4,0) {$s_0$};
			\node[place] (s1) at (-1.5,-1) {$s_1$};
			\node[place,label=above:{$e'\in \delta(V_{i,j})$}] (e1) at (0.5,-1) {$s_{e'}$};
			\node[place] (v1) at (2.5,0) {$s_{e}^1$};
			\node[place,label=right:{$1$}] (t1) at (4.5,0) {$t_1$};
			\node[place,label=right:{$0$ }] (t2) at (4.5,1.5) {$t_2$};
			\foreach \i in {2} {
				\node[place] (s\i) at (-1.5,-2+\i) {$s_i$};
			}
			\path (s1) --++ (0,0.6) node[midway,scale=1.5] {$\vdots$};
			\path (s2) --++ (0,0.6) node[midway,scale=1.5] {$\vdots$};
			\foreach \i in {3} {
				\node[place] (s\i) at (-1.5,-2+\i) {$s_I$};
			}
			\foreach \i in {2} {
				\node[place] (v\i) at (2.5,-0.5+\i) {$s_u$};
			}
			\path (v1) --++ (0,1) node[midway,scale=1.5] {$\vdots$};
			\path (v2) --++ (0,0.6) node[midway,scale=1.5] {$\vdots$};
                \path (e1) --++ (0,1.5) node[midway,scale=1.5] {$\vdots$};
                \path (e1) --++ (1,0) node[midway,scale=1.5] {$\cdots$};
			\foreach \i in {3} {
				\node[place] (v\i) at (2.5,-0.5+\i) {$s_{v}$};
			}
			\node[transition] at (-3,0) {$ a_0 $}  edge [pre] (s0) edge [post]node[fill=white]{$ \varepsilon $} (s1) edge [post] node[fill=white]{$ \varepsilon $} (s2) edge [post] node[fill=white]{$ \varepsilon $} (s3) edge [post, in=180, out=70] node [left,fill=white,pos=0.6]{$\frac{1-\varepsilon|I|}{|V|}$} (v3) edge [post, in=170, out=65] node [left,fill=white,pos=0.55]{$\frac{1-\varepsilon|I|}{|V|}$} (v2); 
			\foreach \i in {2} {
				\node[place, label=below:{$e\in E[V_{i,j}]$}] (e\i) at (0.5,-1+\i) {$s_e$};
			}
			\path (s1) --++ (0,0.6) node[midway,scale=1.5] {$\vdots$};
			\path (s2) --++ (0,0.6) node[midway,scale=1.5] {$\vdots$};
	     	\node[transition] at (-0.5,1) {$ a_{|J|} $}  edge [pre] (s2);
	     	\node[transition] at (-0.5,-1) {$ a_1 $}  edge [pre] (s2);
			\node[transition] at (-0.5,0) {$ a_j $}  edge [pre] (s2) edge [post]node[fill=white]{$ \frac{1}{3\overbar{m}} $} (e1) edge [post] node[fill=white]{$  \frac{2}{3\overbar{m}} $} (e2) ; 
			\path (s2) --++ (1.8,0.7) node[midway,scale=1.5] {$\vdots$};
			\path (s2) --++ (1.8,-0.85) node[midway,scale=1.5] {$\vdots$};
			\node[transition] at (1.5,1) {$ a_0 $}  edge [draw=black, pre] (e2) edge [draw=gray!40,post] (v1) edge [dashed,draw=gray!40,post] (v2) edge [dashed, draw=gray!40,post]  (v3) ; 
            \draw (v1) edge [post] (t1);
		\node[transition]  at (3.5,1.5) {$ a_0 $} edge [pre] (v2)  edge [draw=gray!40,post] (t1) edge [dashed,draw=gray!40,post](t2);
		\node[transition]  at (3.5,2.5) {$ a_0 $} edge [pre] (v3) edge [draw=gray!40,post](t1) edge [dashed, draw=gray!40,post] (t2);
		\end{tikzpicture}}
		\caption{{Construction of the Reduction from max-min VC to {\transitionP}}. The solid gray arrows represent an nominal deterministic outcomes of a transition, with the dashed gray line representing the alternative transition outcomes from the uncertainty set.}
		\medskip    
		\label{fig:sigma-2-p}	 
	\end{figure}
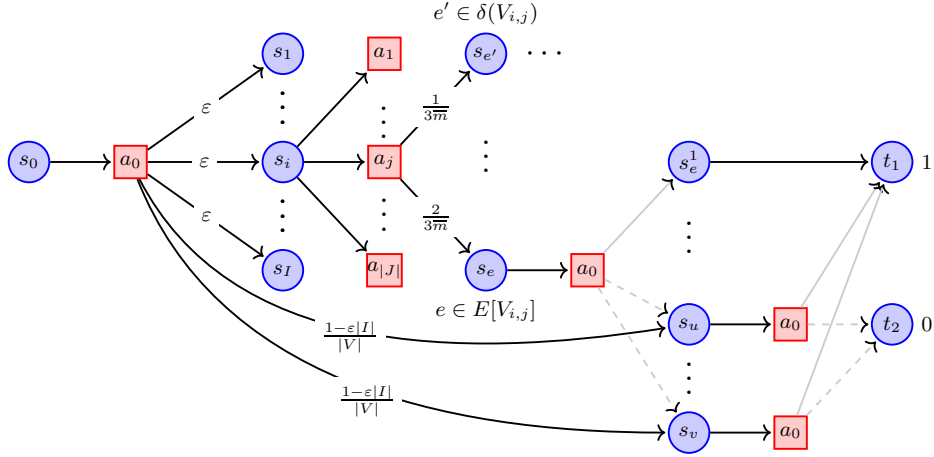

\begin{proof}
	We prove the theorem by a reduction from \textsc{Max-Min Vertex Cover}. In an instance of this problem, we are given a graph $G = (V, E)$ whose vertex set $V$ is partitioned into subsets $\{V_{i,j}\}_{i\in I,j\in J}$ for some finite sets $I$ and $J$.
	For a function $t: I \rightarrow J$, define $V_t := \bigcup_{i \in I} V_{i,t(i)} $ and let $ G_t = (V_t, E[V_t]) $ be the induced subgraph of $ G $ on the vertex set $V_t$.
	The decision version of \textsc{Max-Min Vertex Cover}, which is known to be $\Sigma_{2}^{p}$-complete \citep{ko1995complexity}, asks whether there exists a function $t : I \rightarrow J$ such that the size of a minimum vertex cover in $G_t$ is at least $\ell$ for given value $\ell \in \mathbb{N}$.
	
	Before we begin our reduction, we establish the following assumptions that will simplify the description of our construction. We argue below that they are without loss of generality.
	\begin{enumerate} 
		\item There are no edges with one endpoint in $V_{i,j}$ and one endpoint in $V_{i,j'}$ for any $i \in I$ and any $j, j' \in J$ with $j \neq j'$.
		\item There is $\overbar{m} \in \mathbb{N}$ such that $|E[V_{i, j}]| = \overbar{m}$ and $|E[V_{i,j}:V_{i',j'}]| = \overbar{m}$ for all $i, i' \in I$ with $i \neq i'$ and $j, j' \in J$.
	\end{enumerate}
	The first assumption is w.l.o.g.\ because the edges in question do not appear in $G_t$ for any function $t$ and we can thus delete them.
	To see that the second assumption is w.l.o.g., let $m_1 := \max_{i \in I, j \in J} |E[V_{i,j}]|$ be the maximum number of edges in the subgraph induced by any $V_{i,j}$, and let $m_2 := \max_{i, i' \in I, j, j' \in J, i \neq i'} |E[V_{i,j}:V_{i',j'}]|$ be the maximum number of edges between any two components $V_{i,j}$ and $V_{i',j'}$ with $i \neq i'$. Let $\overbar{m} := \max \{m_1, m_2\}$. 
	For each $i \in I$ and $j \in J$, we introduce two dummy vertices $v'_{i,j},v''_{i,j}$ to each $V_{i, j}$.
	We further introduce $\overbar{m} - |E[V_{i,j}]|$ parallel edges between $v'_{i,j}$ and $v''_{i,j}$ for each $i \in I$ and each $j \in J$, as well as $\overbar{m} - |E[V_{i,j}:V_{i',j'}]|$ edges between $v'_{i,j}$ and $v'_{i',j'}$ for each $i, i' \in I$ with $i \neq i'$ and $j, j' \in J$.
	Note that the resulting graph fulfils the assumptions, and that for any $t : I \rightarrow J$, a vertex set $S$ in the original graph is a vertex cover in $G_t$ for the original instance if and only if $S \cup \{v'_{i,t(i)} : i \in I\}$ is a vertex cover for $G_t$ in the modified instance.
	
	We now proceed to construct an instance of {\transitionP} on a finite-horizon MDP from a given instance of \textsc{Max-Min Vertex Cover} fulfilling the assumptions mentioned above.
	Observe that by the second assumption above, the number of edges in $G_t$ equals $$m := |I|\overbar{m} + \binom{|I|}{2}\overbar{m} = \frac{|I|(|I| + 1)}{2}\overbar{m}$$ for any $t : I \rightarrow J$.
	
	We define the set of states as
	\begin{align*}
		S = \{s_0\} \cup \{s_i : i \in I\} \cup \{s_e : e \in E\} \cup \{s_v : v \in V\} \cup \{t_0, t_1\}.
	\end{align*}
	The set of actions at each state $s \in S \setminus \{t_0, t_1\}$ is given by
  \begin{alignat*}{2}
       A(s) & = \{a_j:\;j\in J\} & \quad \text{ if } s = s_i \text{ for some } i \in I,\\
       A(s) & = \{a_0\} & \text{ otherwise.}
  \end{alignat*}
  States $t_0$ and $t_1$ are terminal states with rewards $r(t_0)=0$ and $r(t_1)=1$. 

Let $\varepsilon = \frac{2\overbar{m}}{2\overbar{m}|I| + |V|}$.
The nominal transition probabilities for actions at states $s_0$ and $s_i$ for $i \in I$ are given as follows and not subjected to uncertainty:
\begin{alignat*}{3}
	 p(s_i|s_0,a_0) &= \varepsilon  && \quad \text{ for } i\in I,\\
	 p(s_v|s_0,a_0) & = \frac{1-\varepsilon|I|}{|V|} && \quad \text{ for } v \in V,\\
	 p(s_e|s_i,a_j) & = \frac{2}{3\overbar{m}}  && \quad \text{ for } i \in I,\; j \in J, \text{ and } e \in E[V_{i,j}],\\ 
	 p(s_e|s_i,a_j) & = \frac{1}{3\overbar{m}}  && \quad \text{ for } i \in I,\; j \in J, \text{ and } e \in \delta(V_{i,j}).
\end{alignat*}
Note that the probabilities at each state-action pair $(s_i, a_j)$ sum up to $1$ as $|E[V_{i,j}]| = |\delta(V_{i,j}| = \overbar{m}$ by our earlier assumption.
 The actions at states $s_e$ for $e \in E$ and $s_v$ for $v \in V$ are uncertain deterministic with the following outcomes:
\begin{alignat*}{3}
	z(s_e, a_0) & = t_1, & \quad  Z(s_e, a_0) & = \{s_v, s_w\} && \quad \text{for } e = \{v, w\} \in E,\\
	z(s_v,a_0) & = t_1, & Z(s_v,a_0) & = \{t_0\} && \quad \text{for } v\in V.
\end{alignat*}
Finally, the uncertainty budget is given by $k \,=\, m + \ell - 1$.

The constructed MDP is shown in \cref{fig:sigma-2-p}.
Note that the only states with multiple actions available are the states $s_i$ for $i \in I$, each having the action set $A(s_i) = \{a_j : j \in J\}$. 
Thus, there is a one-to-one correspondence between policies $\pi$ in the described MDP and functions $t : I \rightarrow J$, defined by the relation $t(i) = j$ if and only if $\pi(s_i) = a_j$.
Given a policy $\pi$, let  $ t^{\pi} $ denote the corresponding function. 
We complete the reduction by showing that for any policy $\pi$, the worst-case reward obtained by $\pi$ is strictly larger than  $1 - \frac{1 - \varepsilon|I|}{|V|} (\ell - 1) - \frac{2\varepsilon}{\overbar{m}} m$, if and only if the size of a minimum vertex cover in the graph $G_{t^\pi}$ is at least $\ell$.
  
  Given a policy $\pi$, consider a worst-case scenario $\hat{z}$.
  For each $e \in E$, let $p_e$ denote the probability that $s_e$ is reached under $\pi$ (note that $\hat{z}$ does not influence this probability, as the transition probabilities of states before $s_e$ are not subjected to uncertainty).
  For each $v \in V$, let $p_v$ be the probability that state $s_v$ is reached under $\pi$ directly from $s_0$, i.e., without having visited any state $s_e$ for $e \in E$ before (again note that this probability does not depend on $\hat{z}$).
  Note that that $p_v = \frac{1 - \varepsilon|I|}{|V|}$ for each $v \in V$ and that $p_e = \frac{2\varepsilon}{3\overbar{m}}$ for $e \in E_{t^\pi}$, that $p_e \leq \frac{\varepsilon}{3\overbar{m}}$ for $e \in E \setminus E_{t^\pi}$. In particular, by choice of $\varepsilon$ (which is the unique solution to $\frac{1 - |I|\varepsilon}{|V|} = \frac{\varepsilon}{2\overbar{m}}$), we have $p_e > p_v$ if $e \in E_{t^\pi}$ and $p_e < p_v$ otherwise.

  Let $\hat{V} := \{v \in V : \hat{z}(s_v^r, a_0) = t_0\}$ and $\hat{E} := \{e \in E : \hat{z}(s_e, a_0) = s_v \text{ for some } v \in \hat{V}\}$.
  Note that the expected reward obtained by $\pi$ under $\hat{z}$ is equal to the probability of reaching state $t_1$, which is equal to
  $1 - \sum_{v \in \hat{V}} p_v - \sum_{e \in \hat{E}} p_e$,
  because the only possibility for the process to reach state $t_0$ is via some $s_v$ for $v \in \hat{V}$. 
  From the above bounds on $p_e$ and $p_v$, we deduce that 
  \begin{align}
      1 - \sum_{v \in \hat{V}} p_v - \sum_{e \in \hat{E}} p_e \;\geq\; 1 - (\ell - 1) \cdot \frac{1 - |I|\varepsilon}{|V|} - m \cdot \frac{2\varepsilon}{3\overbar{m}}, \label{eq:sigma-2-p-reward-bound}
  \end{align}
  because $|\hat{E} \cup \hat{V}| \leq k = m + \ell - 1$ and there are only $|E_{t^\pi}| = m$ edges $e$ with $p_e = \frac{2\varepsilon}{3\overbar{m}}$.
  Furthermore, \eqref{eq:sigma-2-p-reward-bound} holds with equality if and only if $\hat{E} = E_{t^{\pi}}$ and $|\hat{V}| = \ell - 1$, in which case $\hat{V} \subseteq V$ must cover all edges in $E_{t^{\pi}}$, i.e., $\hat{V} \cap V_{t^\pi}$ is a vertex cover in $G_{t^\pi}$ of size at most $\ell - 1$. Thus the worst-case reward of $\pi$ equals $1 - \sum_{v \in \hat{V}} p_v - \sum_{e \in \hat{E}} p_e$ if and only if there is a vertex cover of size less than $\ell$ in $G_{t^\pi}$, and it is strictly larger otherwise.
\end{proof}

\label{sec:transition-inapproximability}

Finally, we also show that {\transitionP} does not allow for any approximation algorithm by proving \cref{thm:transition-inapproximability}.

\thmTransitionInapproximability*

\begin{proof}

We will prove the theorem by reduction from \textsc{3-SAT} to {\transitionP} with an uncertainty budget $k = 2$. In this problem we are given $n$ Boolean variables $x_j$, $j\in N :=\{1,\dots,n\}$ and $m$ clauses $C_j$ for $j\in M :=\{1,\dots,m\}$, with each clause consisting of the disjunction of three literals of the variables. 
Let $L := \{x_j, \neg x_j : j \in N\}$ be the set of literals on the variables. We identify each clause with the set of its three literals (e.g., $C_5 = \{x_4, \neg x_3, x_7\}$). A \emph{truth assignment} is a subset $A \subseteq L$ of the literals containing exactly one literal for each variable, i.e., $|\{x_j, \neg x_j\} \cap A| = 1$ for each $j \in  N$. A clause $C_i$ is fulfilled by truth assignment $A$ if $C_i \cap A \neq \emptyset$, i.e., if at least one of the literals in the clause is selected in the truth assignment.
    The task is to decide whether there exists a truth assignment that fulfills all clauses.

    Given an instance of \textsc{3-SAT}, we construct an instance of {\transitionP} as follows. We introduce the following states:
    \begin{align*}
        S & = \{s_0, s_{c_{m+1}}, s_{n+1}, d, t_1, t_2\} \cup \{s_{c_i} : i \in M\} \cup \{s_j, s_{x_j}, s_{\neg x_j} : j \in N\}
    \end{align*}
    The action sets of the states are the following:
    \begin{alignat*}{2}
        A(s_{c_i}) & = \{a_y : y \in C_i\} \quad && \text{for } i \in M\\
        A(s_{j}) & = \{a_{x_j}, a_{\neg x_j}\} && \text{for } j \in N\\
        A(s_{y}) & = \{a, a'\} && \text{for } y \in L
    \end{alignat*}
For all the other states, $ A(s) = \{a\} $. Terminal rewards are deterministic and given by $r(t_1) = 1$ and $r(t_2) = 0$. The transition probabilities are given as follows. Action $a$ in the starting state $s_0$ is the only action with randomized outcome, with probabilities
    \begin{align*}
        p(s_{c_1} | s_0, a) = p(s'_{c_1} | s_0, a) = \tfrac{1}{2}.
    \end{align*}
    Furthermore, the following actions are is entirely deterministic:
    \begin{alignat*}{2}
        z(s_i, a_y) & = s_{y} \quad && \text{for } i \in N,\; y \in \{x_i, \neg x_i\}\\
        z(s_y, a') & = d \quad && \text{for } y \in L
    \end{alignat*}
    All other actions are uncertain deterministic with the following outcomes:
    \begin{alignat*}{3}
        z(s_{c_i}, a_{y}) & = s_{c_{i+1}}, & \quad  Z(s_{c_i}, a_{y}) & = \{s_y\} \quad & \text{for } i \in M, y \in C_i\\
        z(s_{y}, a) & = s_{j+1}, & Z(s_{y}, a) & = \{t_2\} & \text{for } j \in N, y \in \{x_j, \neg x_j\}\\
        z(s_{c_{m+1}}, a) &= t_1, & Z(s_{c_{m+1}}, a) & = \{d\}   & \\
        z(s_{n+1}, a) &= t_1, & Z(s_{n+1}, a) & = \{d\}   & \\
        z(d, a) & = t_1, & Z(d, a) & = \{t_2\} &
    \end{alignat*}
    Finally, the number of actions whose transition probabilities can deviate is $k = 2$. 
    It is easy to verify that the digraph induced by the MDP is acyclic. See \cref{reduction2} for a depiction of the construction.
    	\begin{figure}[h] 
    	\centering 
    	\resizebox{1\columnwidth}{!}{
    		\begin{tikzpicture}[yscale=-1.6,xscale=1.5,thick,
    			every transition/.style={draw=red,fill=red!20,minimum size=4.8mm},
    			every place/.style={draw=blue,fill=blue!20, minimum size=6mm}]
    			every transition2/.style={draw=yellow,fill=yellow!20,minimum size=4.8mm},
    			\node[place,label=above:{$t=0$}] (s0) at (-7,0) {$s_0$};
    			\node[place,label=above:{}] (sc1) at (-5,-1) {$s_{c_1}$};
    			\node[place,label=above:{}] (sc2) at (-3,-1) {$s_{c_2}$};
    			\node[place,label=above:{}] (scm) at (-2,-1) {$s_{c_m}$};
    			\node[place,label=above:{}] (sm1) at (0.2,-1) {$s_{c_{m+1}}$};
    			\node[place,label=right:{}] (b2) at (3.9,2) {$s_{n+1}$}; 
    			\node[place,label=right:{}] (d2) at (5.9,0.5) {$d$};
    			\node[place,label=above:{$r(t_1)=1$}] (t1) at (7.9,-1) {$t_1$};
    			\node[place,label=below:{$r(t_2)=0$}] (t2) at (7.9,2.5) {$t_2$};
    			\node[place] (sx1) at (-5,2) {$s_1$};
    			\node[transition] at (-6,0) {$a$}  edge [pre] (s0)  edge[post]node[above left]{$ \frac{1}{2}$} (sc1) edge[post]node[below left]{$ \frac{1}{2}$} (sx1);
    			\node[place,label=above:{}] (sx11) at (-3.1,1.2) {$s_{x_1}$};
    			\node[place,label=above:{}] (sx10) at (-3.1,2.8) {$s_{\neg {x}_1}$};
    			\node[place,label=above:{}] (sx2) at (-1,2) {$s{_2}$};
    			\node[place,label=above:{}] (sxn) at (0,2) {$s{_n}$};
    			\path (sx2) --++ (0.8,0) node[midway,scale=1.5] {$\cdots$};
    			\node[place,label=above:{}] (sxn1) at (1.9,1.2) {$s_{x_n}$};
    			\node[place,label=above:{}] (sxn0) at (1.9,2.8) {$s_{\neg{x}_n}$};
    			\node[transition] at (-4.2,-1.8) {}  edge [pre] (sc1) edge[draw=gray!60, line width=1.5pt, post]node[above left]{} (sc2)
    			edge[dashed, draw=gray!60,line width=1.5pt, post] (-3.7,-0.8);
    			\node[transition] at (-4.2,-1) {$ a_{x_1} $}  edge [pre] (sc1)  edge[draw=gray!60, line width=1.5pt, post] (sc2) edge [dashed,draw=gray!60, line width=1.5pt, post]  (sx11);
    			\node[transition] at (-4.2,-0.2) {}  edge [pre] (sc1) edge[draw=gray!60, line width=1.5pt, post]node[above left]{} (sc2) edge[dashed, draw=gray!60,line width=1.5pt, post] (-3.8,0.6);
    			\node[transition] at (-1.2,-1.8) {}  edge [pre] (scm) edge[draw=gray!60,line width=1.5pt, post](sm1) edge[dashed, draw=gray!60,line width=1.5pt, post] (-0.7,-0.8);
    			\node[transition] at (-1.2,-1) {}  edge [pre] (scm) edge[draw=gray!60,line width=1.5pt, post]node[pos=0.4]{} (sm1) edge [dashed, draw=gray!60, line width=1.5pt, post] (-0.7,0) ;
    			\node[transition] at (-1.2,-0.2) {}  edge [pre] (scm) edge[draw=gray!60,line width=1.5pt, post]node[above left]{} (sm1)
    			edge[dashed, draw=gray!60,line width=1.5pt, post]node[above left]{} (-0.7,0.8);
    			\path (sc2) --++ (0.8,0) node[midway,scale=1.5] {$\cdots$};
    			\node[transition] at (-4.1,1.2) {$ a_{x_1} $}  edge [pre] (sx1)  edge[post](sx11);
    			\node[transition] at (-4.1,2.8) {$ a_{\neg x_1} $}  edge [pre] (sx1)  edge[post](sx10);
    			\node[transition] at (-2.1,2) {$ a' $}  edge [pre] (sx10) edge[out=-30,in=210, post](d2);	
    			\node[transition] at (-2.1,2.8) {$ a $}  edge [pre] (sx10)  edge[draw=gray!60,line width=1.5pt, post](sx2) edge[out=25,in=165, dashed,line width=1.5pt, draw=gray!60, post](t2);	
    			\node[transition] at (-2.1,1.2) {$ a $}  edge [pre] (sx11)  edge[draw=gray!60,line width=1.5pt, post](sx2) edge[out=65,in=165, dashed, draw=gray!60,line width=1.5pt, post](t2);
    			\node[transition] at (-2.1,0.4) {$ a' $}  edge [pre] (sx11) edge[out=-10,in=210,post](d2); 		
    			\node[transition] at (0.9,1.2) {$ a_{x_n} $}  edge [pre] (sxn)  edge[post](sxn1);
    			\node[transition] at (0.9,2.8) {$ a_{\neg x_n} $}  edge [ pre] (sxn)  edge[post](sxn0);
    			\node[transition] at (2.9,2.8) {$ a $}  edge [pre] (sxn0)  edge[draw=gray!60,line width=1.5pt,post](b2) edge[dashed, draw=gray!60,line width=1.5pt, post](t2);
    			\node[transition] at (2.9,2) {$ a' $}  edge [pre] (sxn0) edge[post](d2) ; 
    			\node[transition] at (2.9,1.2) {$ a $}  edge [pre] (sxn1)  edge[draw=gray!60 ,line width=1.5pt,post](b2) edge[dashed, draw=gray!60,line width=1.5pt,post](t2);
    			\node[transition] at (2.9,0.4) {$ a' $}  edge [pre] (sxn1) edge[post](d2); 
    			\node[transition] at (4.9,2) {$ a $}  edge [pre] (b2)  edge[dashed,draw=gray!60, line width=1.5pt, post](d2) edge[draw=gray!60,line width=1.5pt, post](t1);
    			\node[transition] at (1.5,-1) {$ a $}  edge [pre] (sm1)  edge[draw=gray!60,line width=1.5pt, post] (t1) edge[dashed, draw=gray!60,line width=1.5pt, post](d2);
    			\node[transition] at (6.9, 0.5) {$ a $}  edge [pre] (d2)  edge[draw=gray!60,line width=1.5pt, post] (t1) edge[dashed, draw=gray!60,line width=1.5pt, post](t2);	
    		\end{tikzpicture}
    	}
    	\caption{Construction for the reduction from \textsc{3-SAT} to {\transitionP}. The solid gray arrows represent an nominal deterministic outcomes of a transition, with the dashed gray line representing the alternative transition outcomes from the uncertainty set. There are at most $k = 2$ actions for which the transition can deviate from its nominal state.} 
    	\label{reduction2}	 
    \end{figure}
    
    We show that there exists a policy $\pi$ with positive expected robust reward $\hat{R}(\pi)$ for the constructed instance of {\transitionP} if and only if there is a satisfying truth assignment for the \textsc{3-SAT} instance. 

    First assume there exists a truth assignment $A$ fulfilling all clauses. We construct the following policy. 
    For each $i \in M$, let $\pi(s_{c_i}) = a_y$ for some $y \in A\cap C_i$.
    For each $j \in N$, let 
    \begin{align*}
        \pi(s_j) = a_{x_j}, \quad \pi(s_{x_j}) = a, \quad \pi(s_{\neg x_j}) = a' \quad & \text{ if } \neg x_j \in A,\\
        \pi(s_j) = a_{\neg x_j}, \quad \pi(s_{x_j}) = a', \quad \pi(s_{\neg x_j}) = a \quad & \text{ if } x_j \in A.
    \end{align*}
    
    Let $\hat{p} \in \hat{\mathcal{P}}$ be a worst-case scenario for policy $\pi$.
    Let $\mathcal{I}_p$ denote the set of state-action pairs whose transition kernel deviates from their nominal values in $\hat{p}$. 
    We further denote by $p^{\pi}(s, s')$ and $\hat{p}_{s,s'}$, respectively, the probability that $s$ transitions to $s'$ under policy $\pi$ for the nominal transition kernel and for $\hat{p}$, respectively.
    Note that $p^{\pi}_{s_{c_1},t_1}=p^{\pi}_{s_1,t_1}=1$.
    
    If $\mathcal{I}_p \cap \{(s_{c_{m+1}},a),(s_{n+1},a)\}\neq \emptyset$, to make the process from the state $s_{c_{m+1}}$ or $ s_{n+1}$ go to the state $ t_2 $, we have $(d,a)\in \mathcal{I}_p$ with $Z(d,a)=t_2$. With the uncertainty budget of 2, only one of the states $s_{c_{m+1}}$, $s_{n+1}$ is modified to end with $ t_2 $. For the alternated probabilities $ \hat{p}_{s_{c_1},t_1} \text{ and } \hat{p}_{s_1,t_1} $, $ \max\{\hat{p}_{s_{c_1},t_1},\hat{p}_{s_1,t_1}\}=\max\{\hat{p}_{s_{c_{m+1}},t_1},\hat{p}_{s_{n+1},t_1}\}=1$.
    
    Assume that $\mathcal{I}_p \cap \{(s_{c_{m+1}},a),(s_{n+1},a)\}=\emptyset$. If $ \bigcup_{i\in M} (s_{c_i},\pi(s_{c_i}))\cap \mathcal{I}_p=\emptyset$ or $ \bigcup_{y\in L} (s_{y},\pi(s_{y}))\cap \mathcal{I}_p=\emptyset$, we have that $ \hat{p}_{s_{c_1},t_1}=1$ or $ \hat{p}_{s_1,t_1}=1$.
    
    Now we suppose that there exists some $ i\in M $ such that $ (s_{c_i},a_y)\in \mathcal{I}_p$ for $ \pi(s_{c_i}) = a_y$. The altered transition probability is $ \hat{p}_{s_{c_i},s_y}(a_y)= 1$. The modified process enters the state $ s_y $ and $ \pi(s_y)=a' $, so it moves to the state $ d $ given $ z(s_y,a')={d}$. Since $ a_{\neg y} $ is chosen in some  state $ s_j $, where $ y \in \{x_j, \neg x_j\}$, we have $p^{\pi}_{s_1,s_{\neg y}}=1$. 
    
    If $ (s_{\neg y},a)\in \mathcal{I}_p,$ then $ \hat{p}_{s_1,t_2}=\hat{p}_{s_{\neg y},t_2}=1 $. As there is no remaining  budget, we have $\hat{p}_{s_{c_1},t_1}=p^{\pi}_{d,t_1}=1$.
    
    If $ (s_{\neg y},a)\notin \mathcal{I}_p,$ with one unit of uncertainty budget, it's not possible to make both $ \hat{p}_{s_{c_1},t_1}=0$ with $ (d,a)\in \mathcal{I}_p $, and $ \hat{p}_{s_1,t_1}=0$ with $ (s_{n+1},a)\in \mathcal{I}_p$.\\ 
    The above implies that the expected total reward obtained by this policy is at least $\tfrac{1}{2}$ for any possible transition probabilities in the budgeted uncertainty set. 
    
    Now assume that there exists a policy $\pi$ with $\hat{R}(\pi)>0$.
    We establish the following claim:
    \begin{claim}
        Let $i \in M$ and let $y \in c_i$ with $\pi(s_{c_i}) = a_y$. 
        \begin{itemize}
            \item If $y = x_j$ for some $j \in N$, then 
        $\pi(s_j) = a_{\neg x_j}$ and $\pi(s_{\neg x_j}) = a$.
            \item If $y = \neg x_j$ for some $j \in N$, then 
        $\pi(s_j) = a_{x_j}$, and $\pi(s_{x_j}) = a$.
        \end{itemize}
        
    \end{claim}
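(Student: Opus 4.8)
The plan is to prove a local statement by contraposition: fix a policy $\pi$ with $\hat R(\pi) > 0$; I will show that if $\pi$ violated the claim at some clause state $s_{c_i}$, then an adversary with budget $k = 2$ could drive the process to $t_2$ from both $s_{c_1}$ and $s_1$, forcing $\hat R(\pi) = 0$. The starting observation is that $s_0$ is the only randomizing state, sending probability $\tfrac{1}{2}$ to $s_{c_1}$ and $\tfrac{1}{2}$ to $s_1$ and leaving everything afterwards deterministic; hence $\hat R(\pi) = 0$ exactly when some budget-$2$ scenario makes both branches terminate in $t_2$. I would isolate two structural properties of $\pi$, each established by exhibiting such a scenario.

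\emph{Variable-chain property.} Write $y_j$ for the literal with $\pi(s_j) = a_{y_j}$. I claim $\pi(s_{y_j}) = a$ for every $j$ that is reached along the nominal variable branch $s_1 \to s_{y_1} \to s_2 \to \cdots$; inductively this makes all of $s_{y_1}, \dots, s_{y_n}$ reachable, so the nominal variable branch runs all the way to $s_{n+1} \to t_1$. Suppose not, and take the smallest $j$ with $s_{y_j}$ reached and $\pi(s_{y_j}) = a'$. Then the variable branch deterministically enters $d$, and the adversary deviates $(d,a)$ (so $d \to t_2$, killing the variable branch) and $(s_{c_{m+1}}, a)$ (so $s_{c_{m+1}} \to d \to t_2$, killing the clause branch), using budget exactly $2$ — a contradiction.

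\emph{Opposite-literal property.} For $i \in M$, let $y_i$ be the literal with $\pi(s_{c_i}) = a_{y_i}$ and let $j$ be its variable, so $y_i, y_j \in \{x_j, \neg x_j\}$. I claim $y_i \neq y_j$. Suppose $y_i = y_j$; then $s_{y_i} = s_{y_j}$, which by the variable-chain property lies on the variable branch with $\pi(s_{y_j}) = a$. The adversary deviates $(s_{c_i}, a_{y_i})$, rerouting the clause branch through $s_{c_i} \to s_{y_i} = s_{y_j}$, and deviates $(s_{y_j}, a)$, which sends $s_{y_j} \to t_2$. Both branches now traverse $s_{y_j}$ — the clause branch after the first deviation, the variable branch by the variable-chain property — so both terminate in $t_2$, again yielding $\hat R(\pi) = 0$, a contradiction.

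Given these two properties the claim is immediate. If $\pi(s_{c_i}) = a_{x_j}$ then $y_i = x_j$, so the opposite-literal property gives $y_j = \neg x_j$, i.e.\ $\pi(s_j) = a_{\neg x_j}$; then $s_{\neg x_j} = s_{y_j}$ is on the variable branch, so the variable-chain property gives $\pi(s_{\neg x_j}) = a$. The case $y_i = \neg x_j$ is symmetric. I expect the crux to be the bookkeeping in the opposite-literal step: the key point is that, once the clause branch has been rerouted onto a literal state that \emph{also} lies on the variable branch, a single further deviation at that shared state destroys both branches at once, which is precisely why a budget of $2$ suffices; along the way one must track reachability on the variable branch with care, since an earlier deviation can truncate the branch before the state under consideration.
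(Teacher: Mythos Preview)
Your proposal is correct and follows essentially the same route as the paper: first establish that along the variable branch $\pi(s_{y_j}) = a$ for every $j$ (otherwise deviate at $(s_{c_{m+1}},a)$ and $(d,a)$), then for the opposite-literal part suppose $\pi(s_j)$ picks the same literal $y$ as $\pi(s_{c_i})$ and deviate at $(s_{c_i},a_y)$ and $(s_y,a)$ to kill both branches. Your treatment of reachability on the variable branch is in fact more careful than the paper's, which asserts the chain property for all $j\in N$ without spelling out the induction.
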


    \begin{proof}
    	For $ j\in N $, if $ \pi(s_j)=a_{x_j}$, then $ \pi(s_{x_j})=a$;
    	if $ \pi(s_j)=a_{\neg x_j}$, then $ \pi(s_{\neg x_j})=a$. Otherwise $p^{\pi}_{s_1,d}=1$, let $\mathcal{I}_p=\{(s_{c_{m+1}},a),(d,a)\}$. As $ Z(s_{c_{m+1}},a)=\{d\}, Z(d,a)=\{t_2\} $, the process is modified to terminate in the state $ t_2 $ with probability 1, which is a contradiction. 
    	
        Consider any $ i\in M $ and $ j\in N$ with $ x_j\in C_i $ and $\pi(s_{c_i}) = a_{x_j}$. By contradiction, suppose that $\pi(s_{j}) = a_{x_j}$. Then $ \pi(s_{x_j})=a $ as argued above. Since $ Z(s_{c_i},a_{x_j})=\{s_{x_j}\} \text{ and } Z(s_{x_j},a)=\{t_2\}$, we have $ \hat{p}_{s_1,t_2}=\hat{p}_{s_{c_1},t_2}=1$ if $\mathcal{I}_p=\{(s_{c_i},a_{x_j}),(s_{x_j},a)\}$. To obtain a positive expected total reward, we must have $\pi(s_{j}) = a_{\neg x_j} \text{ and }  =\pi(s_{\neg x_j})=a$.
    \end{proof}
    Now define $A' := \{y \in L : \pi(s_{C_i}) = a_y \text{ for some } i \in N\}$.
    By the above claim, $|A' \cap \{x_j, \neg x_j\}| \leq 1$ for each $j \in N$.
    Moreover, $A' \cap C_i \neq \emptyset$ for $j \in M$, as $\pi(s_{C_i}) = a_y$ implies $y \in L$ by construction of the MDP.
    Therefore, $A := A' \cup \{x_j : j \in N, \neg x_j \notin A'\}$ is a feasible truth assignment for the given \textsc{3-SAT} instance.
\end{proof} 
\section*{Acknowledgments}
This research was supported by project G072520N of the Research Programme "Optimization and analytics for stochastic and robust project scheduling" of the Fund for Scientific Research—Flanders (Belgium) (F.W.O.-Vlaanderen).

\bibliographystyle{plainnat}
\renewcommand{\bibsection}{\section*{References}}
\bibliography{reference.bib}

\end{document}